\documentclass{article}

\usepackage[english]{babel}
\usepackage{graphicx}
\usepackage{amsmath}
\usepackage{amsfonts}
\usepackage{url}
\usepackage[a4paper,margin=3cm]{geometry}

\setlength{\parindent}{1.0 cm}

\usepackage{amsthm}

\newtheorem{theorem}{Theorem}[section]
\newtheorem{lemma}[theorem]{Lemma}
\newtheorem{corollary}[theorem]{Corollary}
\theoremstyle{definition}
\newtheorem{definition}[theorem]{Definition}
\newtheorem{remark}[theorem]{Remark}
\newtheorem{problem}{Problem}

\newcommand{\Z}{\mathbb{Z}}
\newcommand{\ff}{\mathfrak{f}}
\newcommand{\Q}{\mathbb{Q}}
\newcommand{\R}{\mathbb{R}}
\newcommand{\F}{\mathbb{F}}
\renewcommand{\L}{\mathcal{L}}

\newcommand{\set}[1]{\left\{{#1}\right\}}
\newcommand{\st}{\mid}
\newcommand{\gen}[1]{\left\langle{#1}\right\rangle}
\newcommand{\quo}[2]{{#1}\big/\raisebox{-0.5ex}{$#2$}}
\newcommand{\freemod}[2]{{#1}^{({#2})}}
\newcommand{\lra}{\mathop{\longrightarrow}\limits}
\newcommand{\ra}{\rightarrow}
\newcommand{\id}{\mathrm{id}}
\renewcommand{\ker}[1]{\mathrm{ker}\left({#1}\right)}
\newcommand{\cok}[1]{\mathrm{cok}\left({#1}\right)}
\newcommand{\im}[1]{\mathrm{im}\left({#1}\right)}
\newcommand{\supp}[1]{\mathrm{supp}{\left({#1}\right)}}
\newcommand{\Dep}[1]{\mathrm{Dep}\left({#1}\right)}
\newcommand{\Hom}[2][]{\ensuremath{\mathrm{Hom}_{#1}\left({#2}\right)}}
\newcommand{\rk}[2][]{\ensuremath{\mathrm{rk}_{#1}\left({#2}\right)}}
\renewcommand{\dim}[2][]{\ensuremath{\mathrm{dim}_{#1}\left({#2}\right)}}
\renewcommand{\gcd}[1]{\ensuremath{\mathrm{gcd}\left({#1}\right)}}
\newcommand{\norm}[1]{\ensuremath{\left|{#1}\right|}}

\begin{document}

\title{An Algebraic Framework for Discrete Tomography:\\Revealing the
Structure of Dependencies}
\author{\small Arjen Stolk$^1$, K. Joost Batenburg$^2$ \\ \scriptsize
$^1$Mathematical Institute, University of Leiden, The Netherlands\\
\scriptsize $^2$Vision Lab, University of Antwerp, Belgium \\ }

\maketitle

\begin{abstract}

Discrete tomography is concerned with the reconstruction of images that
are defined on a discrete set of lattice points from their projections
in several directions. The range of values that can be assigned to each
lattice point is typically a small discrete set. In this paper we
present a framework for studying these problems
from an algebraic perspective, based on Ring Theory and
Commutative Algebra. A principal advantage of this abstract setting is
that a vast body of existing theory becomes accessible for solving
Discrete Tomography problems. We provide proofs of several new results
on the structure of dependencies between projections, including a
discrete analogon of the well-known Helgason-Ludwig consistency
conditions from continuous tomography.

\end{abstract}

\section{Introduction}\label{introduction_section}%

Discrete tomography (DT) is concerned with the reconstruction of
discrete images from their projections.
According to \cite{Herman99,Herman07}, the field of discrete tomography
deals with the reconstruction of images from a small number
of projections, where the set of pixel values is known to
have only a few discrete values. On the other hand, when the
field of discrete tomography was founded by Larry Shepp in
1994, the main focus was on the reconstruction of (usually
binary) images for which the domain is a discrete set, which
seems to be more natural as a characteristic property of discrete
tomography. The number of pixel values may be as
small as two, but reconstruction problems for more values
are also considered. In this paper, we follow the latter
definition of discrete tomography.

Most of the literature on discrete tomography focuses on
the reconstruction of lattice images, that are defined on a
discrete set of points, typically a subset of $\mathbb{Z}^2$.  An image
is formed by assigning a value to each lattice point. The range of these
values is usually restricted to a small, discrete set. The case of
\emph{binary images}, where each point is assigned a value from the set
$\{0,1\}$ is most common in the DT literature. \emph{Projections} of an
image are obtained by summation of the point values along sets of
parallel discrete lines. For an individual line, such a sum is often
referred to as the \emph{line sum}.

Discrete tomography problems have been studied in various fields
of Mathematics, including Combinatorics, Discrete Mathematics and
Combinatorial Optimization. An overview of known results is given
in \cite{Gardner06}, at the end of Section 2.
Already in the 1950s, both Ryser \cite{Ryser57} and Gale \cite{Gale57}
considered the combinatorial problem of reconstructing a binary matrix
from its row and column sums. They provided existence and uniqueness
conditions, as well as concrete reconstruction algorithms. DT emerged as
a field of research in the 1990s, motivated by applications in atomic
resolution electron microscopy \cite{Schwander93, Kisielowski95,
Jinschek07}. Since that time, many fundamental results on the existence,
uniqueness and stability of solutions have been obtained, as well as a
variety of proposed reconstruction algorithms.

Besides purely combinatorial properties, integer numbers play an
important role throughout DT, due to their close connection with the
concepts of reconstruction lattice, lattice line and line sums. A link
with the field of Algebraic Number Theory was established in
\cite{Gardner97}, where Gardner and Gritzmann used Galois theory and
$p$-adic valuations to prove that convex lattice sets are uniquely
determined by their projections in certain finite sets of directions.
Hajdu and Tijdeman described in \cite{Hajdu01} how a powerful extension
of the binary tomography problem is obtained by considering images for
which each point is assigned a value in $\mathbb{Z}$. The fact that both
the image values and the line sums are in $\mathbb{Z}$ allows for the
application of Ring Theory, and in particular the Chinese Remainder
Theorem, for characterizing the set of switching components: images for
which the projections in all given lattice directions are 0. Their
theory for the extended problem leads to new insights in the binary
reconstruction problem as well, as any binary solution must also be a
solution of the extended problem, and the binary solutions can be
characterized as the solutions of the extended problem that have minimal
Euclidean norm.

More recently, techniques from Algebra and Algebraic Number Theory were
used to obtain Discrete Tomography results on stability \cite{Alpers06},
a link between DT and the Prouhet-–Tarry-–Escott problem from Number
Theory \cite{Alpers07}, and the reconstruction of quasicrystals
\cite{Baake06,Gritzmann08}.

In this paper we present a comprehensive framework for the treatment
of DT problems from an
algebraic perspective, based on general Ring Theory and Commutative
Algebra. Modern algebra is a mature mathematical field that provides a
framework in which a wide range of problems can be described, analyzed
and solved.  An important advantage of this abstract setting is that a
vast body of existing theory becomes accessible for solving discrete
tomography problems. Based on our algebraic framework, we provide proofs
of several new results on the structure of dependencies between the
projections, including a discrete analogon of the well-known
Helgason-Ludwig consistency conditions from continuous tomography.

A principal aim of this paper is to create a bridge between the fields
of Combinatorics and classical Number Theory on one side, and the
proposed abstract algebraic model on the other side. To this end, the
definitions and results we describe within our algebraic model will be
followed by concrete examples, illustrating their correspondences with
existing results and concepts.

This paper is organized as follows. In Section
\ref{classical_notation_section} the basic DT problems are introduced in
a combinatorial setting. In Section \ref{global_example} we recall an
example from the literature. Section \ref{algebraic_notation_section}
introduces the same concepts, but this time in our proposed algebraic
framework. We also derive some basic properties linking combinatorial
notions to notions within the framework. Sections
\ref{global_case_section} and \ref{global_ring_section} set up the
algebraic theory, for images defined on $\mathbb{Z}^2$ (the
\emph{global} case).  In Section \ref{example_section} we revisit the
example from Section \ref{global_example} from an algebraic perspective.

In the next sections, the attention is shifted towards images that are
defined on a \emph{subset} of $\mathbb{Z}^2$. Section
\ref{comparison_sequence_section} introduces a relative setup, where a
DT problem on a particular domain is related to a problem on a subset of
that domain. In Sections \ref{finite_convex_section} and
\ref{local_global_dependencies_section}, we apply this relation to
completely describe the structure of line sums for finite convex sets.
The Appendix collects some algebraic results used in the paper.

The authors would like to expres their gratitude towards prof. H.W.
Lenstra for the interesting conversations that led to the development of
our algebraic framework. In particular, prof. Lenstra came up with
Theorem \ref{weak chinese remainder theorem} in an effort to understand
the global dependencies.

\section{Classical definitions and problems}%
\label{classical_notation_section}%

In this section we provide an overview of several important problems in
discrete tomography, within their original combinatorial context.  For
the most part, we follow the basic terminology from \cite{Herman99}.

Let $K\subset\mathbb{Z}$. We will call the elements of $K$
\emph{colours}.  In discrete tomography, we often have $K = \{0,1\}$.
Note that $K$ does not have to be finite.
A nonzero vector $v = (a,b) \in \mathbb{Z}^2$ such that $a \ge 0$ is
called a \emph{lattice direction}. If $a$ and $b$ are coprime, we call
$v$ a \emph{primitive lattice direction}.  The set of all lattice
directions is denoted by $\mathcal{V}$.  For any $t \in \mathbb{Z}^2$,
the set $\ell_{v,t} = \{{\lambda}v + t\,\,| \lambda \in \mathbb{Z}\}$ is
called a \emph{lattice line parallel to} $v$. The set of all lattice
lines parallel to $v$ is denoted by $\mathcal{L}_{v}$.  A function
$f:\mathbb{Z}^2\to K$ with finite support is called a \emph{table}.
The set of all tables is denoted by $\mathcal{F}$. We prefer using the
word table over the more common \emph{image}, as the latter is also used
to denote the image of a map.

\begin{definition}
Let $f \in \mathcal{F}$ and $v \in \mathcal{V}$.
The function $P_{v}(f): \mathcal{L}_v \to \mathbb{Z}$ defined by
\[P_{v}(f)(\ell) = \sum_{x \in \ell}f(x)\]
is called the projection of $f$ in the direction $v$.
\end{definition}
The values $P_{v}(f)(\ell)$ are usually called \emph{line sums}.  For $v
\in \mathcal{V}$, we denote the set of all functions $\mathcal{L}_v \to
\mathbb{Z}$ by $L_v$ (the \emph{potential line sums for direction $v$}).

For a finite ordered set $D = \{v_1,\ldots,v_k\} \subset \mathcal{V}$ of
distinct primitive lattice directions, we define the \emph{projection}
of $f$ along $D$ by
\[P_{D}(f) = P_{v_1}(f) \oplus \ldots \oplus P_{v_k}(f),\]
where $\oplus$ denotes the direct sum. The map $P_{D}$ is called the
\emph{projection map}. Put $L_D = L_{v_1} \oplus \ldots \oplus L_{v_k}$,
the set of \emph{potential line sums for directions $D$}.

Most problems in discrete tomography deal with the reconstruction of
a table $f$ from its projections in a given set of lattice directions. It
is common that a set $A \subset \mathbb{Z}^2$ is given, such that
the support of $f$ must be contained in $A$. We call the set $A$ the
\emph{reconstruction lattice}. Put
$\mathcal{A} = \{f \in \mathcal{F}: x \notin A\implies f(x) = 0\}$.

Similar to Chapter 1 of \cite{Herman99}, we introduce three basic
problems  of DT: Consistency, Reconstruction and Uniqueness:

\vspace{3mm}
\begin{problem}[Consistency]
Let $K$ and $A$ be given.
Let $D = \{v_1,\ldots,v_k\} \subset \mathcal{V}$ be a finite set of
distinct primitive lattice directions and $p \in L_D$ be a given map of
potential line sums.  Does there exist a table $f \in \mathcal{A}$ such
that $P_{D}(f) = p$?
\label{consistency_problem}
\end{problem}

\vspace{3mm}
\begin{problem}[Reconstruction]
Let $K$ and $A$ be given.
Let $D = \{v_1,\ldots,v_k\} \subset \mathcal{V}$ be a finite set of
distinct primitive lattice directions and $p \in L_D$ be a given map of
potential line sums.  Construct a table $f \in \mathcal{A}$ such that
$P_{D}(f) = p$, or decide that no such table exists.
\label{reconstruction_problem}
\end{problem}

\vspace{3mm}
\begin{problem}[Uniqueness]
Given a solution $f$ of Problem \ref{reconstruction_problem}, is there
another solution $g \neq f$ of Problem \ref{reconstruction_problem}?
\label{uniqueness_problem}
\end{problem}

In the most common reconstruction problem in the DT literature, $A$ is a
finite rectangular set of points and $K = \{0,1\}$.  In that case, a
table $f$ is usually considered as a rectangular binary matrix.  For the
case $D = \{(1,0), (0,1)\}$, the three basic problems were solved by
Ryser in the 1950s. It was proved by Gardner et al.  that the
reconstruction problem for more than two lattice directions is NP-hard
\cite{Gardner99}.  Several variants of the reconstruction problem that
make additional assumptions about the table $f$, such as convexity or
periodicity, can be solved effectively if more projections are given
\cite{Barcucci96,Brunetti01}.

Tijdeman and Hajdu considered the case that $A$ is a rectangular set and
$K = \mathbb{Z}$. They show that the resulting problems are strongly
connected to the binary case: if the reconstruction problem for $K =
\mathbb{Z}$ has a binary solution, the set of binary solutions is
exactly the set of tables over $\mathbb{Z}$ for which the Euclidean norm
is minimal.  In \cite{Hajdu01}, they characterized the set of
\emph{switching components}, tables for which the projection is $0$ in
all given lattice directions. In particular, this provides a (partial)
solution for the uniqueness problem, which also has consequences for the
case $K = \{0,1\}$.

\subsection{Dependencies}%
\label{dependency_subsection}%

The theory of Hajdu and Tijdeman also provides insight in the
\emph{dependencies} between the projections of a table, defined below.

If the reconstruction lattice $A$ is finite, the set of lines along
directions in $D$ intersecting with $A$ is also finite. Denote the
number of such lines by $n(A, D)$.
A map $p \in L_D$ of potential line sums can now be represented by an
$n(A,D)$-dimensional vector over $\mathbb{Z}$, where we only consider
the line sums for lines that intersect with $A$. In the remainder of
this section, we use this representation for the projection of a table.

\begin{definition}[Dependency]\label{definition classical dependency}%
Let $A \subset \mathbb{Z}^2$ be a finite reconstruction lattice.
Let $D \subset \mathcal{V}$ be a finite set of distinct primitive
lattice directions.
A \emph{dependency} is a vector $c \in \mathbb{Z}^{n(A,D)}$ such that
for all $f \in \mathcal{F}: P_{D}(f) \cdot c = 0$, where $\cdot$
denotes the vector inner product.
\end{definition}

The vector $c$ is called the \emph{coefficient vector} of the dependency.
Intuitively, dependencies are relations that must always hold between
the set of projections of an object. The simplest such relation corresponds
to the fact that for all lattice directions $v_1,v_2\in\mathcal{V}$:
\[
\sum_{\ell\in\mathcal{L}_{v_1}}P_{v_1}(f)(\ell) =
\sum_{\ell\in\mathcal{L}_{v_2}}P_{v_2}(f)(\ell) =
\sum_{x\in\mathcal{A}}f(x)
\]

More complex dependencies can be formed between sets of three or more
projections.  We call a set of dependencies \emph{independent} if the
corresponding coefficient vectors are linearly independent. Note that
the dependencies form a linear subspace of $\mathbb{Z}^{n(A,D)}$.

\subsection{Example}\label{global_example}%

In \cite{Hajdu01}, the dependencies were systematically investigated for
the case  $K = \mathbb{Q}$,
$A = \{(i,j) \in \mathbb{Z}^2: 0 \leq i < m, 0 \leq j < n\}$ and\\
$D = \{(1,0), (0,1), (1,1), (1,-1)\}$.
Put
\[
\begin{array}{rclll}
r_j &=& \sum_{i=0}\limits^{m-1}f(i,j)\quad\quad & 0 \le j \le n-1,\quad
& \mbox{the row sums,}\\[15pt]
c_i &=& \sum_{j=0}\limits^{n-1}f(i,j)& 0 \le i \le m-1,
& \mbox{the column sums,}\\[20pt]
t_h &=& \mathop{\sum\limits_{j=i+h}}\limits_{(i,j)\in A}f(i,j)
& -m + 1 \leq h < n, & \mbox{the diagonal sums,}\\[20pt]
u_h &=& \mathop{\sum\limits_{j=-i+h}}\limits_{(i,j)\in A}f(i,j)
& 0 \leq h < m + n - 1, & \mbox{the anti-diagonal sums.}
\end{array}
\]

Then the following seven dependencies hold for the line sums:
\begin{eqnarray*}
\sum_{j=0}^{n-1}r_j = \sum_{i=0}^{m-1}s_i \,\, &=&
\sum_{h=-m+1}^{n-1}t_h = \sum_{h=0}^{m+n-2}u_h,\\
\mathop{\sum_{h=-m+1}^{n-1}}_{h\mathrm{\,\,is\,\,odd}}t_h \,\,&=&
\mathop{\sum_{h=0}^{m+n-2}}_{h\mathrm{\,\,is\,\,odd}}u_h,\\
-\sum_{j=0}^{n-1}jr_j + \sum_{i=0}^{m-1}is_i \,\,&=&
\sum_{h=-m+1}^{n-1}ht_h,\\
\sum_{j=0}^{n-1}jr_j + \sum_{i=0}^{m-1}is_i \,\,&=&
\sum_{h=0}^{m+n-2}hu_h,\\
2\sum_{j=0}^{n-1}j^2r_j + 2\sum_{i=0}^{m-1}i^2s_i \,\,&=&
\sum_{h=-m+1}^{n-1}h^2t_h + \sum_{h=0}^{m+n-2}h^2u_h.
\end{eqnarray*}

If $A$ is sufficiently large, these dependencies form an independent
set. It was shown in \cite{Hajdu01} that these relations form a basis of
the space of all dependencies over $\mathbb{Q}$. Although Hajdu and
Tijdeman described the complete set of dependencies for this particular
set of directions, they did not provide a characterization of
dependencies for general sets of directions. They derived a formula for
the dimension of the space of dependencies, for any rectangular set $A$
and any set of directions.

Several properties of the given example deserve further attention.  The
coefficients of the vectors describing the dependencies have the
structure of polynomials in $i$, $j$ and $h$. The degree of these
polynomials is at most two (for the last dependency), and this degree
appears to increase along with the number of directions. In particular,
the maximum degree of the polynomials describing the coefficients in
this example is two, for the dependency involving all four directions,
whereas the maximum degree for a dependency involving any subset of
three directions is one, and the maximum degree for the pairwise
dependencies is zero.

For this set $D$, all of the 7 independent dependencies can be defined
for the case $A = \mathbb{Z}^2$, such that for smaller reconstruction
lattices the same relations hold, restricted to the lines intersecting
$A$. In this paper, we will denote such dependencies by the term
\emph{global dependencies}.

For other sets of directions, such as $D = \{(1,1), (1,2)\}$, there can
also be dependencies such as the one shown in Fig. \ref{corner_figure}.
Two corner points of the reconstruction lattice belong to a line in both
directions, leading to trivial dependencies between the corresponding
line sums. Such dependencies depend on the shape of the reconstruction
lattice and cannot be extended to dependencies on $A = \mathbb{Z}^2$. We
refer to such dependencies as \emph{local dependencies}. An analysis of
the dependencies for the case of a rectangular reconstruction lattice
$A$ is given in
\cite{Vandalen07}.

\begin{figure}[ht!]
\centering
\includegraphics[width=0.4\textwidth]{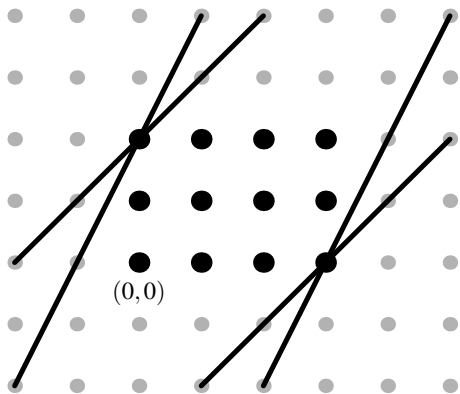}
\caption{At corners of the reconstruction lattice, there can be local
dependencies between line sums in two or more directions.}
\label{corner_figure}
\end{figure}

There is a strong analogy between the concept of dependencies between
line sums in discrete tomography, and so-called \emph{consistency
conditions} in continuous tomography. Ludwig \cite{Ludwig66} and
Helgason \cite{Helgason80} described a set of relations between the
projections of a continuous function defined on $\mathbb{R}^2$.
Moreover, if a set of one-dimensional functions satisfies these
relations, this is also a \emph{sufficient} condition for correspondence
to a projected function.

In the remainder of this paper, we provide a characterization of the
dependencies between projections in discrete tomography, based on our
algebraic framework.  As dependencies indicate relations that must hold
for any set of projections, they provide a necessary condition for the
consistency problem. We prove that for a particular class of discrete
tomography problems, a set of projections satisfies the dependency
relations \emph{if and only if} it corresponds to a table. This leads to
a discrete analogon of the consistency conditions from continuous
tomography.

\section{Algebraic framework}%
\label{algebraic_notation_section}%

In this section we introduce the basic concepts and definitions used in
our algebraic formulation of discrete tomography. For a thorough
introduction to terminology and concepts of Algebra, we refer to
\cite{Lang}. The Appendix of this paper covers some of the properties
used in detail.

Let $A \subset \Z^2$ be non-empty and let $k$ be a commutative ring that
is not the zero ring.
We let
\[T(A, k) = \freemod{k}{A} = \set{ f : A \ra k \st f(x) = 0
\mbox{ for all but finitely many $x \in A$} }\]
be the space of $k$-valued \emph{tables} on $A$. It is a free
$k$-module with a basis indexed by the elements of $A$. We will identify
the elements of $A$ with the elements of this basis.

Let $d \in \Z^2 \setminus \set{0}$ be a direction and $p \in \Z^2$ be
a point. Recall that the \emph{(lattice) line} through $p$ in
the direction $d$ is the set $\set{ p + \lambda d \st \lambda \in \Z}$.
Two points $p$ and $q$ are on the same line in direction $d$ precisely
if they differ by an integer multiple of $d$. The quotient group
$\Z^2/\gen{d}$ therefore parametrises all the lines in the direction $d$.
For $A \subset \Z^2$ write $\L_d(A)$ for the image of
$A$ in $\Z^2/\gen{d}$, i.e. the set of lines in the direction $d$
that intersect $A$.

We call $(a, b) \in \Z^2 \setminus \set 0$ with $\gcd{a,b} = 1$
a \emph{primitive} direction. Whenever $d$ is a primitive direction, the
quotient $\Z^2/\gen{d}$ is isomorphic to $\Z$. This means we can
label the lines in direction $d$ with integers, starting with $0$ for
the line through the origin.

We fix once and for all pairwise independent directions $d_1, \ldots,
d_t \in \Z^2 \setminus \set0$ and write $\L_i(A) = \L_{d_i}(A)$ for the
lines in direction $d_i$ that meet $A$. Let
\[L_i(A,k) = \freemod{k}{\L_i(A)}\]
be the space of \emph{potential line sums} in direction $d_i$ and let
\[L(A, k) = \bigoplus_{i=1}^{t} L_i(A,k)\]
be the full space of potential line sums. These are all free
$k$-modules. A basis for $L_i(A,k)$ is given by $\L_i(A)$ and so a basis
for $L(A,k)$ is given by $\L(A) := \coprod_{i=1}^{t} \L_i(A)$.

\begin{definition}
The \emph{line sum map}
\[\sigma_{A,k} : T(A,k) \lra L(A,k)\]
is defined as the $k$-linear map that sends $x \in A$ to the
vector $(\ell_i)_{i=1}^{t}$, where $\ell_i \in \L_i(A)$ is the line in
direction $d_i$ through $x$.
\end{definition}

The line sum map is the direct sum of the component maps
$ \sigma_{i, A, k}: T(A,k) \ra L_i(A,k)$.

The kernel of the line sum map,
\[\ker{\sigma_{A,k}} = \set{t \in T(A,k) \st \sigma_{A,k}(t) = 0 },\]
identifies the space of \emph{switching components} of the discrete
tomography problem: two tables have the same vector of line sums if and
only if they differ by an element of $\ker{\sigma_{A,k}}$.
We will use the cokernel
\[\cok{\sigma_{A,k}} = \quo{L(A,k)}{\im{\sigma_{A,k}}}\]
to gain insight in the structure of the set of possible line sums
of tables within the full space of potential line sums. In particular,
the cardinality of the cokernel `measures' the difference
between these sets.

\begin{definition}
A $k$-linear \emph{dependency} between line sums is a $k$-linear map
\[r : L(A,k) \lra k\]
such that $r \circ \sigma_{A,k}$ is the zero map.
\end{definition}

Note that such a map gives rise to a map
$\bar{r} : \cok{\sigma_{A,k}} \ra k$
and that conversely any $k$-linear map $\cok{\sigma_{A,k}} \ra k$
gives rise to a dependency. In other words, there is an inclusion
\[ \Hom[k]{\cok{\sigma_{A,k}},k} \subset \Hom[k]{L(A,k), k} \]
whose image is precisely the set of dependencies. We will write
$\Dep{A,k}$ for this subspace.

\begin{remark}\label{weight functions}%
The natural map
\[\begin{array}{rcccc}
W & : &  \Hom[k]{L(A, k), k} & \lra & \left\{ c : \L(A) \ra k \right\} \\
& & \phi & \longmapsto & [\ell \mapsto \phi(\ell)]
\end{array}\]
is a bijection.
\end{remark}

For a $\phi \in \Hom[k]{L(A,k), k}$ we can think of $W(\phi)$ as the
\emph{weight} that $\phi$ assigns to each line in $\L(A)$. For
dependencies this corresponds to the concept of a \emph{coefficient
vector} introduced in Section \ref{dependency_subsection}.
If $r \in \Dep{A,k}$ is a dependency then $W(r)$ corresponds to the
vector $c$ from Definition \ref{definition classical dependency}.

The next lemma gives an example of the link between algebraic properties
of the cokernel and questions concerning the discrete tomography
problem.

\begin{lemma}\label{free cokernel}%
Let $A \subset \Z^2$ and let $k$ be a commutative ring that is not the
zero ring.
Suppose that $\cok{\sigma_{A,k}}$ is a free $k$-module of finite rank $n$.
Then $\Dep{A,k}$ is also a free $k$-module of rank $n$ and for any
$l \in L(A,k)$ we have $l \in \im{\sigma_{A,k}}$ if and only if
$d(l) = 0$ for all $d \in \Dep{A,k}$.
\label{dependency_cok_lemma}
\end{lemma}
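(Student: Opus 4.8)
The plan is to exploit the short exact sequence of $k$-modules
\[
0 \lra \im{\sigma_{A,k}} \lra L(A,k) \lra \cok{\sigma_{A,k}} \lra 0
\]
together with the hypothesis that the quotient $\cok{\sigma_{A,k}}$ is free of rank $n$. First I would observe that a free module is projective, so this sequence splits: there is an isomorphism $L(A,k) \cong \im{\sigma_{A,k}} \oplus \cok{\sigma_{A,k}}$ under which the projection onto $L(A,k)/\im{\sigma_{A,k}}$ becomes the projection onto the second factor. Applying the left-exact functor $\Hom[k]{-,k}$ to the sequence, and using that the sequence splits so that $\Hom$ preserves its exactness, yields a short exact sequence
\[
0 \lra \Hom[k]{\cok{\sigma_{A,k}},k} \lra \Hom[k]{L(A,k),k} \lra \Hom[k]{\im{\sigma_{A,k}},k} \lra 0 .
\]
By the discussion preceding the lemma, the image of the first map is exactly $\Dep{A,k}$, so $\Dep{A,k} \cong \Hom[k]{\cok{\sigma_{A,k}},k} \cong \Hom[k]{\freemod{k}{n}, k} \cong k^n$, a free $k$-module of rank $n$. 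This establishes the first assertion.

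For the second assertion, fix $l \in L(A,k)$ and let $\bar l$ denote its image in $\cok{\sigma_{A,k}}$. If $l \in \im{\sigma_{A,k}}$ then $\bar l = 0$, so certainly $d(l) = 0$ for every $d \in \Dep{A,k}$, since every such $d$ factors through the quotient map $L(A,k) \to \cok{\sigma_{A,k}}$. Conversely, suppose $d(l) = 0$ for all $d \in \Dep{A,k}$. Because $\Dep{A,k}$ corresponds under the isomorphism above precisely to $\Hom[k]{\cok{\sigma_{A,k}},k}$ acting via the quotient map, this says that $\psi(\bar l) = 0$ for every $\psi \in \Hom[k]{\cok{\sigma_{A,k}},k}$. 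Now I would invoke freeness of $\cok{\sigma_{A,k}}$: writing $\bar l = \sum_{j=1}^{n} c_j e_j$ in a chosen basis $e_1, \ldots, e_n$, the coordinate functionals $e_j^{*}$ lie in $\Hom[k]{\cok{\sigma_{A,k}},k}$, and $e_j^{*}(\bar l) = c_j = 0$ for each $j$, whence $\bar l = 0$, i.e. $l \in \im{\sigma_{A,k}}$.

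The only point requiring care — and the step I expect to be the main obstacle — is the claim that $\Hom[k]{-,k}$ carries the exact sequence above to an exact sequence, in particular that it is right-exact here even though $\Hom$ is only left-exact in general; this is where the freeness (hence projectivity) of $\cok{\sigma_{A,k}}$ is essential, guaranteeing that the sequence splits so that $\Hom[k]{-,k}$ of it remains split exact. A reader could equally well avoid invoking $\mathrm{Ext}$ or splitting arguments and argue directly: the surjection $L(A,k) \to \cok{\sigma_{A,k}} \cong k^n$ admits a section because $k^n$ is free (lift a basis), giving the decomposition $L(A,k) \cong \im{\sigma_{A,k}} \oplus k^n$ explicitly, after which both assertions follow by inspecting the two summands. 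I would present the direct splitting argument, since it keeps the proof self-contained and makes the identification of $\Dep{A,k}$ with the second summand transparent.
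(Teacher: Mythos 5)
Your proof is correct and follows essentially the same route as the paper's: both identify $\Dep{A,k}$ with $\Hom[k]{\cok{\sigma_{A,k}},k}$, use that the dual of a free $k$-module of rank $n$ is free of rank $n$ via the coordinate functionals of a chosen basis, and then use those same coordinate functionals to show that an element of $\cok{\sigma_{A,k}}$ killed by all dependencies is zero, so that $\bar{l}=0$ exactly when every dependency vanishes on $l$. Your detour through splitting the short exact sequence and dualizing it is correct but not actually needed, since the identification of $\Dep{A,k}$ with the dual of the cokernel is already available from the discussion preceding the lemma.
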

\begin{proof}
Let $c_1, \ldots c_n$ be a basis for $\cok{\sigma_{A, k}}$. We can write
any $x \in \cok{\sigma_{A,k}}$ uniquely as $x_1 c_1 + \cdots + x_n c_n$.
The maps $e_i: x \mapsto x_i $
are elements of $\Dep{A,k} = \Hom[k]{\cok{\sigma_{A,k}}, k}$.
We claim that the $e_i$ are a basis for $\Dep{A,k}$. Let $r$ be in
$\Dep{A,k}$. For any $x = \sum x_i c_i$ in $\cok{\sigma_{A,k}}$ we have
\[r(x) = d(\sum x_i c_i) = \sum x_i r(c_i).\]
Put $r_i = r(c_i)$. Then we have $r = \sum r_i e_i$. So the $e_i$
generate $\Dep{A,k}$. Note that the $r_i$ are uniquely determined by
$r$. We conclude that the $e_i$ are a basis of $\Dep{A,k}$.

Note that for all $x \in \cok{\sigma_{A,k}}$, we have $x = \sum e_i(x)
c_i$, so if $d(x) = 0$ for all $d \in \Dep{A,k}$, then $x = 0$. When we
apply this to $x = \bar{l}$ for some $l \in L(A,k)$, we see that
$r(l) = 0$ for all $r \in \Dep{A,k}$ if and only if $\bar{l} = 0$, i.e.
$l \in \im{\sigma_{A,k}}$.
\end{proof}

The lemma that we have just proved can be interpreted as follows.
Whenever we find for some $A$ that $\cok{\sigma_{A,k}}$ is a free
$k$-module of finite rank, we have the following:
\textit{A vector of potential line sums comes from a table precisely if
it satisfies all dependencies.}
As the space of dependencies is also free and of finite rank, it in fact
suffices to check finitely many dependencies.

\section{The global case}%
\label{global_case_section}%

In this section we consider the case $A = \Z^2$. We will show that in
this case, the objects defined in the previous section have the
structure of rings and modules, and their homomorphisms. This allows us
to completely describe the kernel and cokernel of the line sum map.

The following three $k$-modules are isomorphic in a natural way:
\[T(\Z^2, k) \qquad \cong \qquad k[\Z^2]
\qquad \cong \qquad k[u, u^{-1}, v, v^{-1}].\]
For some basic properties of group rings such as $k[\Z^2]$, see the
appendix of this article. The isomorphisms are
\[\begin{array}{ccc}
T(\Z^2,k) & \lra & k[\Z^2] \\
\relax[c : \Z^2 \ra k] & \longmapsto & \sum_{x \in \Z^2} c(x) x
\end{array}\]
and
\[\begin{array}{ccc}
k[\Z^2] & \lra & k[u,u^{-1},v,v^{-1}] \\
\sum_{x \in \Z^2} \lambda_x x & \longmapsto &
\sum_{(a,b) \in \Z^2} \lambda_{(a,b)} u^av^b.
\end{array}\]
Note that $k[\Z^2]$ and $k[u,u^{-1},v,v^{-1}]$ are both $k$-algebras and
that the second isomorphism is an isomorphism of $k$-algebras. We also
view $T(\Z^2, k)$ as a $k$-algebra via these isomorphisms.

In the same way there is a natural isomorphism of $k$-modules
\[ L_i(\Z^2, k) \cong k\left[\quo{\Z^2}{\gen{d_i}}\right] \]
which puts a ring structure on the spaces of potential line sums. By
Lemma \ref{groupring quotients} we have an isomorphism
$k[\Z^2/\gen{d_i}] \cong k[\Z^2]/(d_i - 1)$.
Viewed in this way, the line sum map
$\sigma_{i, \Z^2, k} : T(\Z^2,k) \ra L_i(\Z^2,k)$ is
the quotient map
\[k[\Z^2] \ra \quo{k[\Z^2]}{(d_i - 1)}.\]

Taking sums, we find a $k$-algebra structure on $L(\Z^2, k)$ such that
the line sum map $\sigma_{\Z^2, k} : T(\Z^2, k) \ra L(\Z^2, k)$ is a
$k$-algebra map which is the direct sum of quotient maps. We will now
study the structure of these quotient maps from an algebraic perspective
using the ideas outlined in the last part of the appendix.

\begin{lemma}\label{independent directions}%
Let $d, e \in \Z^2$ be independent directions. Then $d - 1$ is
weakly coprime (see \ref{weakly coprime} in the appendix)
to $e - 1$ in $k[\Z^2]$.
\end{lemma}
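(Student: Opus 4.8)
The plan is to reduce the assertion to the single statement that $e - 1$ is a non-zero-divisor on the quotient ring $\quo{k[\Z^2]}{(d-1)}$. The starting observation is that for every $g \in \Z^2 \setminus \set{0}$ the element $g - 1$ is a non-zero-divisor in $k[\Z^2]$: if $(g-1)\cdot c = 0$, then comparing coefficients shows that $c$ is constant along each coset of the infinite cyclic group $\gen{g}$, and since $c$ has finite support this forces $c = 0$. In particular both $d - 1$ and $e - 1$ are non-zero-divisors, so --- unwinding the definition of weak coprimality in \ref{weakly coprime} (for non-zero-divisors it is symmetric and equivalent to the equality $(d-1)\cap(e-1) = (d-1)(e-1)$) --- it suffices to prove that $(d-1)g = (e-1)h$ implies $(d-1) \mid h$, i.e. that the image of $e-1$ is a non-zero-divisor in $\quo{k[\Z^2]}{(d-1)}$.

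Next I would pick coordinates adapted to $d$. Write $d = n d'$ with $d'$ primitive and $n \geq 1$, extend $d'$ to a $\Z$-basis $(d', b)$ of $\Z^2$, and identify $k[\Z^2] \cong k[x, x^{-1}, y, y^{-1}]$ with $x \leftrightarrow d'$ and $y \leftrightarrow b$. Then $d - 1$ corresponds to $x^n - 1$, and since $x$ is already invertible modulo $x^n - 1$ one gets
\[ \quo{k[\Z^2]}{(d-1)} \;\cong\; \bigl(\quo{k[x]}{(x^n-1)}\bigr)[y, y^{-1}]. \]
Writing $e = c_1 d' + c_2 b$, independence of $d$ and $e$ forces $c_2 \neq 0$, and under this isomorphism $e - 1$ becomes $\bar{x}^{c_1} y^{c_2} - 1$, where $\bar{x}$ is a unit.

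It therefore remains to show that for any commutative ring $B$, any unit $u$ of $B$, and any nonzero integer $m$, the element $u y^m - 1$ is a non-zero-divisor in $B[y, y^{-1}]$. Multiplying by a suitable unit reduces this to $y^m - v$ with $m \geq 1$ and $v \in B$; and if $(y^m - v)\, g = 0$ for some $g = \sum_i g_i y^i$ with finite support and $g \neq 0$, then taking $i_0$ maximal with $g_{i_0} \neq 0$ and comparing coefficients of $y^{i_0 + m}$ gives $g_{i_0} = v\, g_{i_0 + m} = 0$, a contradiction. This completes the argument.

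The genuinely delicate point is the first paragraph: matching the precise form of "weakly coprime" from the appendix to the non-zero-divisor reformulation, and handling the possibility that $d$ (hence $e$) need not be primitive --- which is exactly what the adapted basis and the factor $x^n - 1$ absorb. Everything after that is the elementary leading-coefficient computation.
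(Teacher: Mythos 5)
Your argument is correct, but it travels a more coordinate-dependent road than the paper does. The paper proves the (equivalent) statement with the roles of $d$ and $e$ exchanged directly: identifying $k[\Z^2]/(d-1)$ with the group ring $k[\Z^2/\gen{d}]$, it expands $(e-1)f = 0$ to get $f_{x+ke} = f_x$ for all $k \in \Z$, and independence guarantees that the elements $x + ke$ are pairwise distinct in $\Z^2/\gen{d}$, so finite support forces $f = 0$. This needs no choice of basis and no discussion of whether weak coprimality is symmetric: since the hypothesis is symmetric in $d$ and $e$, one may simply swap the names, rather than invoke your characterization via $(d-1)\cap(e-1) = (d-1)(e-1)$; your reduction is nonetheless sound (it really only uses that $d-1$ is a non-zero-divisor in $k[\Z^2]$, which you prove correctly). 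Your adapted-basis computation, identifying $k[\Z^2]/(d-1)$ with $\bigl(k[x]/(x^n-1)\bigr)[y,y^{-1}]$ and finishing with the leading-coefficient argument on $uy^m - 1$, is a concrete rendering of the same finite-support mechanism: the Laurent variable $y$ records exactly the infinite orbit $x + ke$ appearing in the paper's proof. What your version buys is an explicit model of the quotient ring (and a transparent treatment of non-primitive $d$ via $x^n - 1$); what the paper's version buys is brevity and coordinate-freeness.
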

\begin{proof}
By Lemma \ref{groupring quotients} we can see
$k[\Z^2]/(d - 1)$ as the group ring $k[\Z^2/\gen{d}]$.
Suppose we have
\[f = \sum_{x \in \Z^2/\gen{d}} f_x x \in
k\left[\quo{\Z^2}{\gen{d}}\right]\]
such that $(e-1) f = 0$. When we expand
\[0 = (e - 1) f = \sum_{x \in \Z^2/\gen{d}} (f_{x-e} - f_x) x,\]
we see that $f_{x + k e} = f_x$ for all $x \in \Z^2/\gen{d}$ and
$k \in \Z$.  As $d$ and $e$ are independent, all $x + k e$ are different
in $\Z^2/\gen{d}$. We conclude that we must have $f_x = 0$ for all
$x \in \Z^2/\gen{d}$, as only finitely many coefficients of $f$ are
non-zero.
\end{proof}

\begin{theorem}\label{global_kernel_and_cokernel}%
The kernel of $\sigma_{\Z^2,k}$ is given by
\[ \ker{\sigma_{\Z^2, k}} = (d_1 - 1)\cdots(d_t - 1) k[\Z^2].\]
The cokernel $\cok{\sigma_{\Z^2,k}}$ is a free $k$-module of rank
\[\sum_{1 \le i < j \le t} |\!\det(d_i, d_j)|.\]
\end{theorem}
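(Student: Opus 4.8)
The plan is to work entirely inside the $k$-algebra $R = k[\Z^2] \cong k[u,u^{-1},v,v^{-1}]$ and exploit that $\sigma_{\Z^2,k}$ is the direct sum of the quotient maps $R \to R/(d_i-1)$. For the kernel, the key point is that $\ker \sigma_{\Z^2,k} = \bigcap_{i=1}^t (d_i-1)R$, since an element maps to zero in every $R/(d_i-1)$ exactly when it lies in each ideal $(d_i-1)R$. So the kernel statement amounts to the claim that the intersection of the principal ideals $(d_i-1)R$ equals the product $(d_1-1)\cdots(d_t-1)R$. One inclusion is trivial. For the reverse, I would argue by induction on $t$ using Lemma \ref{independent directions}: the elements $d_i - 1$ are pairwise weakly coprime, and in $R$ — which, as the appendix presumably records, is a unique factorization domain when $k$ is a domain, or more robustly one can argue directly — pairwise (weak) coprimality lets one pass from $\bigcap_{i<t}(d_i-1)R = \prod_{i<t}(d_i-1)R$ to the case of $t$ ideals. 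The clean way is: if $g \in (e-1)R$ and $g \in (d-1)R$ with $d-1$ weakly coprime to $e-1$, write $g = (e-1)h$; then $(e-1)h \in (d-1)R$, and weak coprimality (which says multiplication by $e-1$ is injective on $R/(d-1)$) forces $h \in (d-1)R$, so $g \in (d-1)(e-1)R$. Iterating handles the intersection of all $t$ ideals. I expect this to be the part requiring the most care, since one must correctly invoke the precise form of "weakly coprime" from the appendix and make sure the inductive bookkeeping on products of principal ideals is valid in $R$.

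For the cokernel, the approach is to compute $\cok \sigma_{\Z^2,k}$ directly. Since $\sigma_{\Z^2,k} : R \to \bigoplus_{i=1}^t R/(d_i-1)$ has image the "diagonal" $\{(\bar r, \ldots, \bar r)\}$, the cokernel fits into the exact sequence coming from the fact that $R/(d_i-1)$ and $R/(d_j-1)$ have as a common quotient $R/(d_i-1, d_j-1) \cong k[\Z^2/\gen{d_i,d_j}]$, and the latter is a free $k$-module of rank $|\!\det(d_i,d_j)|$ (the index of the sublattice $\gen{d_i,d_j}$ in $\Z^2$). I would make this precise via the telescoping/Mayer–Vietoris-style resolution: map $\bigoplus_i R/(d_i-1)$ onto $\bigoplus_{i<j} R/(d_i-1,d_j-1)$ by $(x_i)_i \mapsto (\bar x_i - \bar x_j)_{i<j}$, show using the kernel computation (the $t=2$ case, i.e. $(d_i-1)R \cap (d_j-1)R = (d_i-1)(d_j-1)R$, which gives $R/(d_i-1)(d_j-1)R \hookrightarrow R/(d_i-1)\oplus R/(d_j-1)$ with cokernel $R/(d_i-1,d_j-1)$) that the composite $R \to \bigoplus_i R/(d_i-1) \to \bigoplus_{i<j} R/(d_i-1,d_j-1)$ is zero and that this identifies $\cok \sigma_{\Z^2,k}$ with a submodule of $\bigoplus_{i<j} k[\Z^2/\gen{d_i,d_j}]$; a rank count then gives the stated formula. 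The cleanest route may in fact be to induct on $t$: splitting off the last direction, $\cok \sigma_{\Z^2,k}$ for $t$ directions is an extension of $\cok \sigma$ for the first $t-1$ directions by $R/(d_1-1,d_t-1) \oplus \cdots \oplus R/(d_{t-1}-1,d_t-1)$, using that $(d_1-1)\cdots(d_{t-1}-1)R \cap (d_t-1)R = (d_1-1)\cdots(d_t-1)R$ by the kernel result.

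Two technical points need attention. First, freeness of the cokernel: each $R/(d_i-1,d_j-1) \cong k[\Z^2/\gen{d_i,d_j}]$ is free over $k$ on the cosets, of rank $[\Z^2 : \gen{d_i,d_j}] = |\!\det(d_i,d_j)|$, and one must check the relevant short exact sequences of $k$-modules split (they do, being sequences of free modules, or because the relevant submodule is a direct summand — this should be arranged so that at each inductive step the sequence is $0 \to (\text{free}) \to \cok \to (\text{free}) \to 0$). Second, and this is where I expect the real subtlety to lie: verifying that the maps in the telescoping complex have exactly the claimed kernels and cokernels reduces, at each stage, to the two-ideal intersection identity $(a)\cap(b) = (ab)$ for weakly coprime $a,b$, so the whole theorem genuinely rests on Lemma \ref{independent directions} plus careful homological bookkeeping. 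The main obstacle is thus organizing the induction so that the intersection-of-ideals computations feed cleanly into an exact sequence whose terms are visibly free $k$-modules of the right rank; once the two-direction case is nailed down, the general case should follow by a routine but fiddly induction.
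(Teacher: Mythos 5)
Your kernel argument is correct and is essentially the paper's: the identity $\bigcap_i (d_i-1)k[\Z^2] = (d_1-1)\cdots(d_t-1)k[\Z^2]$ is exactly the injectivity half of Theorem \ref{weak chinese remainder theorem}, proved there by the same induction you sketch (a product of elements each acting injectively on $k[\Z^2]/(d_t-1)$ again acts injectively); the UFD remark is unnecessary, and your reading of ``weakly coprime'' swaps the two elements relative to Definition \ref{weakly coprime}, which is harmless here since Lemma \ref{independent directions} applies to either ordering of a pair of independent directions.

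The genuine gap is in your primary route for the cokernel. The complex $k[\Z^2] \ra \bigoplus_i k[\Z^2]/(d_i-1) \ra \bigoplus_{i<j} k[\Z^2]/(d_i-1,d_j-1)$, with second map the pairwise difference map $\partial$, is in general \emph{not} exact in the middle, so $\cok{\sigma_{\Z^2,k}}$ does not embed into $\bigoplus_{i<j} k[\Z^2/\gen{d_i,d_j}]$, and no rank count along those lines can work. Concretely, take $t=3$ with $d_1=(1,0)$, $d_2=(0,1)$, $d_3=(1,1)$: each pair generates $\Z^2$, so each $k[\Z^2]/(d_i-1,d_j-1)\cong k$ and the three maps $k[\Z^2]/(d_i-1)\ra k$ are augmentations. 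The element $(\ell_1-\ell_0,0,0)$, where $\ell_0,\ell_1$ are two distinct lines in direction $d_1$, lies in $\ker{\partial}$ (each coordinate has augmentation $0$), but it is not in $\im{\sigma_{\Z^2,k}}$: the functional assigning weight $j$ to the $d_1$-line through $(0,j)$, weight $-i$ to the $d_2$-line through $(i,0)$, and weight $-h$ to the $d_3$-line indexed by $h=j-i$ kills every $\sigma_{\Z^2,k}(x^iy^j)$ but takes the value $1$ on this element. (Relatedly, $\partial$ is not surjective either, since its image satisfies the cocycle relation among the three components, so even its image has the wrong rank: at most $2$ here, versus the theorem's $3$.) The defect is exactly what your two-term complex cannot see, and is the reason the paper works with a \emph{filtration} of the cokernel rather than an embedding. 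Your fallback route --- splitting off the last direction and using $(d_1-1)\cdots(d_{t-1}-1)k[\Z^2]\cap(d_t-1)k[\Z^2]=(d_1-1)\cdots(d_t-1)k[\Z^2]$ --- is precisely the paper's proof: Theorem \ref{weak chinese remainder theorem} (via Lemmas \ref{wcrt2}, \ref{product filtration} and \ref{composition of injectives}) gives a filtration of $\cok{\sigma_{\Z^2,k}}$ with successive quotients $k[\Z^2]/(d_i-1,d_j-1)\cong k[\Z^2/\gen{d_i,d_j}]$, free of rank $|\det(d_i,d_j)|$, and each stage splits because the \emph{quotient} at that stage is free (projective) --- not merely because all terms involved are free, so phrase the splitting that way. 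If you carry out that induction carefully and drop the Mayer--Vietoris embedding, your proof becomes the paper's.
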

\begin{proof}
By Lemma \ref{independent directions}, $d_i - 1$ is weakly coprime to
$d_j - 1$ in $k[\Z^2]$ whenever $i \neq j$. So we can apply Theorem
\ref{weak chinese remainder theorem} to the map
\[
\sigma_{\Z^2, k} : k[\Z^2] \lra \bigoplus_{i=0}^{t} \quo{\Z^2}{d_i - 1}.
\]
This immediately gives us the formula for the kernel given in the
theorem. For the cokernel, we note that by Lemma \ref{groupring
quotients} we have
\[
\quo{k[\Z^2]}{(d_i - 1, d_j - 1)} =
k\left[\quo{\Z^2}{\gen{d_i,d_j}}\right],
\]
which is a free $k$-module of rank $|\det(d_i,d_j)|$.
In particular, all the successive quotients of the filtration on the
cokernel are free $k$-modules. Therefore all the quotients are split
(see, e.g., \cite[Ch. III.3, Prop. 3.2]{Lang}) and we conclude that
\[\cok{\sigma_{\Z^2, k}} \cong \bigoplus_{1 \le i < j \le t}
k\left[\quo{\Z^2}{\gen{d_i,d_j}}\right].\]
\end{proof}

This result leads to a (partial) discrete analogon of
the Helgason-Ludwig consistency conditions from continuous
tomography, providing a necessary and sufficient condition
for consistency of a vector of potential line sums:

\begin{corollary}\label{global_consistency_problem}
A vector of potential line sums in $L(\Z^2, k)$ comes from a table in
$T(\Z^2, k)$ if and only if it satisfies all dependencies. Moreover, we
only have to check this for a set of
$\sum_{1 \le i < j \le t} |\!\det(d_i, d_j)|$
independent dependencies.
\end{corollary}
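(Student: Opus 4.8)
The plan is to obtain Corollary~\ref{global_consistency_problem} as an immediate consequence of Theorem~\ref{global_kernel_and_cokernel} together with Lemma~\ref{free cokernel}. First I would invoke Theorem~\ref{global_kernel_and_cokernel} to conclude that $\cok{\sigma_{\Z^2,k}}$ is a free $k$-module of finite rank $n = \sum_{1 \le i < j \le t} |\!\det(d_i, d_j)|$. This is precisely the hypothesis needed to apply Lemma~\ref{free cokernel} with $A = \Z^2$. That lemma then yields two things at once: that $\Dep{\Z^2,k}$ is a free $k$-module of the same rank $n$, and that a vector $l \in L(\Z^2,k)$ lies in $\im{\sigma_{\Z^2,k}}$ precisely when $r(l) = 0$ for every $r \in \Dep{\Z^2,k}$. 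Unwinding the definition, $l \in \im{\sigma_{\Z^2,k}}$ means exactly that $l$ is the vector of line sums $\sigma_{\Z^2,k}(f)$ of some table $f \in T(\Z^2,k)$, so this is the claimed ``if and only if''.

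Second, for the ``moreover'' part, I would use that $\Dep{\Z^2,k}$ is free of rank $n$: fix a basis $r_1, \ldots, r_n$. Since every dependency is a $k$-linear combination of $r_1, \ldots, r_n$, the condition ``$r(l) = 0$ for all $r \in \Dep{\Z^2,k}$'' is equivalent to the finite system ``$r_i(l) = 0$ for $i = 1, \ldots, n$''. Thus it suffices to verify the $n$ independent dependencies $r_1, \ldots, r_n$, where independence is exactly linear independence of the corresponding coefficient vectors under the bijection $W$ of Remark~\ref{weight functions}.

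I expect there to be essentially no obstacle here: the corollary is a direct packaging of the preceding theorem and lemma, and the only care needed is to make the translation between the module-theoretic statement (membership in $\im{\sigma_{\Z^2,k}}$) and the tomographic phrasing (coming from a table) explicit, and to note that freeness of $\Dep{\Z^2,k}$ is what reduces the infinitely many dependency conditions to $n$ of them. One minor point worth stating cleanly is why the basis $r_1,\dots,r_n$ constitutes an \emph{independent} set of dependencies in the sense of Section~\ref{dependency_subsection}: this follows because $W$ is a bijection (Remark~\ref{weight functions}) and is $k$-linear, so a $k$-basis of $\Dep{\Z^2,k}$ maps to a linearly independent family of coefficient vectors.
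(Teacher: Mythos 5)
Your proposal is correct and follows exactly the paper's route: the paper's proof is simply the observation that Theorem~\ref{global_kernel_and_cokernel} supplies the freeness and rank of $\cok{\sigma_{\Z^2,k}}$ needed to apply Lemma~\ref{free cokernel}. Your additional remarks (translating image membership into ``comes from a table'' and reducing to a basis of $\Dep{\Z^2,k}$) merely make explicit what the paper leaves implicit.
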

\begin{proof} Theorem \ref{global_kernel_and_cokernel} shows that we can
apply Lemma \ref{free cokernel} to the global cokernel.
\end{proof}

Looking at example \ref{global_example} we compute
$\sum_{1 \le i < j \le 4} |\!\det(d_i, d_j)| = 7$. This tells us that
the list of 7 independent dependencies we had is complete, in the sense
that at least when $k$ is a field, they will form a basis of
$\Dep{\Z^2, k}$.

For a full discrete analogon of the continuous consistency conditions,
one should also provide a charaterization of the structure of the
individual dependencies.  The next section provides additional insight
into the coefficient structure of the dependencies.

\section{The global line sum map as an extension of rings}%
\label{global_ring_section}%

We now focus our attention more on the ring theoretic aspect of the
line sum map. We can view $L(\Z^2, k)$ as an extension of its subring
$\im{\sigma_{\Z^2,k}}$. Both these rings have relative dimension $1$
over $k$. This is a situation that has been extensively studied because
of its relation to Algebraic Number Theory. An important object in this
context is the \emph{conductor} of the extension, the largest ideal of
$L(\Z^2, k)$ that is also an ideal of $\im{\sigma_{\Z^2,k}}$.

\begin{lemma}
Put $D_i = \prod_{j \neq i} (d_j - 1)$. The conductor of $L(\Z^2, k)$
over $\im{\sigma_{\Z^2,k}}$ is given by
\[\ff_k =
\overline{D_1} \quo{k[\Z^2]}{d_1 - 1}
\oplus\cdots\oplus
\overline{D_t} \quo{k[\Z^2]}{d_t - 1}.\]
\end{lemma}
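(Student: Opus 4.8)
The plan is to identify the conductor by a direct computation in the explicit ring-theoretic model set up in the previous sections. Recall that $\sigma_{\Z^2,k}$ is the direct sum of the quotient maps $k[\Z^2] \ra k[\Z^2]/(d_i-1)$, so $\im{\sigma_{\Z^2,k}}$ sits inside $R := \bigoplus_i k[\Z^2]/(d_i-1)$ as the image of a single copy of $k[\Z^2]$ under the diagonal. By definition the conductor $\ff_k$ is the largest ideal of $R$ contained in $\im{\sigma_{\Z^2,k}}$; equivalently, $\ff_k = \{\, x \in R \st xR \subseteq \im{\sigma_{\Z^2,k}}\,\}$. I would first show that the candidate $\mathfrak c := \overline{D_1}\,(k[\Z^2]/(d_1-1)) \oplus \cdots \oplus \overline{D_t}\,(k[\Z^2]/(d_t-1))$ is contained in $\im{\sigma_{\Z^2,k}}$. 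The key observation is that $D_i = \prod_{j\neq i}(d_j-1)$ maps to $0$ in $k[\Z^2]/(d_j-1)$ for every $j\neq i$, and maps to $\overline{D_i}$ in the $i$-th component; hence the image under $\sigma_{\Z^2,k}$ of the element $D_i \in k[\Z^2]$ is exactly the generator $(0,\dots,\overline{D_i},\dots,0)$ of the $i$-th summand of $\mathfrak c$. Since $\mathfrak c$ is generated as an ideal of $R$ by these elements (each summand $\overline{D_i}\,(k[\Z^2]/(d_i-1))$ being the principal ideal generated by $\overline{D_i}$ in a factor ring, and $R$-multiplication acting componentwise), and since $\im{\sigma_{\Z^2,k}}$ — while not obviously an ideal of $R$ a priori — does contain $r \cdot \sigma_{\Z^2,k}(D_i)$ for any $r \in R$ provided we can realize $r$ componentwise, I need to check that $\mathfrak c$ really is an $R$-submodule of $\im{\sigma_{\Z^2,k}}$. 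For this it suffices to note that $\overline{D_i}\cdot(k[\Z^2]/(d_i-1))$ lifts: any element $\overline{D_i}\,\bar f$ with $f \in k[\Z^2]$ equals $\sigma_{i,\Z^2,k}(D_i f)$, and $D_i f$ maps to $0$ in all other components since $D_i$ does, so $(0,\dots,\overline{D_i}\bar f,\dots,0) = \sigma_{\Z^2,k}(D_i f) \in \im{\sigma_{\Z^2,k}}$. Summing over $i$ gives $\mathfrak c \subseteq \im{\sigma_{\Z^2,k}}$, and $\mathfrak c$ is visibly an ideal of $R$.

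Next I would prove maximality, i.e. that any ideal $I$ of $R$ with $I \subseteq \im{\sigma_{\Z^2,k}}$ satisfies $I \subseteq \mathfrak c$. Take $x = (x_1,\dots,x_t) \in I$. Because $I$ is an ideal of $R$, it also contains the componentwise product $e_i x = (0,\dots,x_i,\dots,0)$, where $e_i$ is the $i$-th idempotent of $R$. So it is enough to show that whenever $(0,\dots,x_i,\dots,0) \in \im{\sigma_{\Z^2,k}}$ we have $x_i \in \overline{D_i}\,(k[\Z^2]/(d_i-1))$. Now $(0,\dots,x_i,\dots,0) \in \im{\sigma_{\Z^2,k}}$ means there is $g \in k[\Z^2]$ with $\bar g = 0$ in $k[\Z^2]/(d_j-1)$ for all $j \neq i$ and $\bar g = x_i$ in the $i$-th factor. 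The condition $g \in (d_j-1)$ for all $j \neq i$, combined with the fact (from Lemma \ref{independent directions}) that the $d_j-1$ are pairwise weakly coprime, should force $g \in \prod_{j\neq i}(d_j-1)\,k[\Z^2] = D_i\,k[\Z^2]$; this is the crux and is exactly the kind of statement Theorem \ref{weak chinese remainder theorem} is designed to deliver — the intersection of the ideals $(d_j-1)$ over $j\neq i$ equals their product. Granting that, $g = D_i h$ for some $h$, so $x_i = \overline{D_i h} = \overline{D_i}\,\bar h \in \overline{D_i}\,(k[\Z^2]/(d_i-1))$, as desired. This yields $I \subseteq \mathfrak c$ and completes the identification $\ff_k = \mathfrak c$.

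The main obstacle I anticipate is the step "$g \in \bigcap_{j\neq i}(d_j-1) \implies g \in \prod_{j\neq i}(d_j-1)$", i.e. verifying that pairwise weak coprimality of the $d_j-1$ is enough to conclude that the intersection of the corresponding principal ideals is their product, for an arbitrary (possibly not Noetherian, not a domain) base ring $k$. I would handle this by invoking the appendix's Theorem \ref{weak chinese remainder theorem} applied to the sub-family $\{d_j : j\neq i\}$: that theorem computes the kernel of $k[\Z^2] \ra \bigoplus_{j\neq i} k[\Z^2]/(d_j-1)$ as the product ideal $\prod_{j\neq i}(d_j-1)k[\Z^2]$, which is precisely $\bigcap_{j\neq i}(d_j-1)$ as a set. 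Everything else is bookkeeping with the idempotent decomposition of $R$ and the observation that $D_i$ kills all but the $i$-th factor; I would present those parts tersely. One small point to state carefully: $R = \bigoplus_i k[\Z^2]/(d_i-1)$ is a finite product of rings and hence has the orthogonal idempotents $e_i$, so "$I$ is an ideal of $R$" genuinely implies $e_i I \subseteq I$, which is what legitimizes reducing to a single component.
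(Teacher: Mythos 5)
Your proposal is correct and follows essentially the same route as the paper: show the candidate is an ideal of $L(\Z^2,k)$ inside the image by lifting each summand via $\sigma_{\Z^2,k}(D_i f)$, then for maximality use the componentwise idempotents to reduce to a single coordinate and identify the kernel of the line sum map for the directions $d_j$, $j \neq i$, with $D_i\,k[\Z^2]$. The only cosmetic difference is that you invoke Theorem \ref{weak chinese remainder theorem} together with Lemma \ref{independent directions} directly, where the paper cites Theorem \ref{global_kernel_and_cokernel} applied to the subfamily of directions, which amounts to the same argument.
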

\begin{proof}
Note that $D_i$ reduces to $0$ in $k[\Z^2] /(d_j - 1)$ for all $j \neq i$.
We conclude that the ideal $(D_1, \ldots, D_t)$ of $k[\Z^2]$ is mapped by
$\sigma_{\Z^2,k}$ onto $\ff_k$. In particular, this implies that $\ff_k$
is indeed an $\im{\sigma_{\Z^2, k}}$ ideal.

Conversely, suppose $I \subset \im{\sigma_{\Z^2, k}}$ is an ideal that
is also closed under multiplication by $L(\Z^2, k)$. We want to show
that $I \subset \ff_k$. Let $x = (x_1, \dots, x_t) \in I$. As $I$ is an
$L(\Z^2, k)$ ideal, we must also have
$(0, \ldots, x_i, \ldots, 0) \in I$. As $I \subset \im{\sigma_{\Z^2,
k}}$ there is an $\tilde{x}_i \in k[\Z^2]$ such that
$\sigma_{\Z^2, k}(\tilde{x}_i) = (0, \ldots, x_i, \ldots, 0)$. We have
$x = \sigma_{\Z^2, k}(\tilde{x}_1 + \cdots + \tilde{x}_t)$, so we are
done if we can show that $\tilde{x}_i$ is a multiple of $D_i$ for all
$i$.

To show this, we apply Theorem \ref{global_kernel_and_cokernel} to the
directions $d_j$ with $j \neq i$. Note that $\tilde{x}_i$ maps to $0$
under the line sum map in this case. The theorem tells us that the
kernel of this map is generated by $D_i$, so that indeed $\tilde{x}_i$
is a multiple of $D_i$ for all $i$.
\end{proof}

Note that the quotient module $L(\Z^2,k)/\ff_k$ is a free
$k$-module of dimension $\sum_{i \neq j} |\det(d_i, d_j)|$. This is
twice the dimension of $\cok{\sigma_{\Z^2,k}} = L(Z^2,k) /
\im{\sigma_{\Z^2,k}}$. We see that $\im{\sigma_{\Z^2,k}}$ sits
precisely in the middle between $L(\Z^2,k)$ and $\ff_k$. This is not a
surprise, it happens in this situation whenever the rings are
`sufficiently nice,' e.g. when they are Gorenstein rings.

We have not yet fully explored the implications of this ring theoretic
view for the structure of $\cok{\sigma_{\Z^2,k}}$, but we believe it
warrants further investigation. To illustrate its use, we will derive
the following result on the coefficient functions of
dependencies in $\Dep{\Z^2,k}$.

For the remainder of this section, we assume that all the $d_i$ are
primitive directions. This means that $\Z^2 / \gen{d_i}$ is isomorphic
to $\Z$. For the rest of this section we also fix isomorphisms
$\Z^2 / \gen{d_i} \cong \Z$. What this means is that the lines in each
direction $d_i$ can be numbered in sequence. The choice of isomorphisms
comes down to picking whether we number from left to right or the other
way around.

Recall from Remark \ref{weight functions} that a dependency
$r \in \Dep{Z^2, k}$ can be represented by a function $W(r)$ from
$\L(\Z^2)$ to $k$. From the choices we have just made, $\L(\Z^2)$ is
identified with $t$ copies of $\Z$. This means that to represent a
dependency by a set of $t$ two-sided infinite sequences
\[ W_i(r) : \Z \lra k. \]

\begin{theorem} With the assumptions above, each sequence
$W_i(r)$ satisfies a non-trivial linear recurrence relation that does
not depend on $r$.
\end{theorem}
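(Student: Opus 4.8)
The plan is to translate the statement about sequences $W_i(r)$ into a statement about the module structure of $L(\Z^2,k)$ and $\Dep{\Z^2,k}$, and then extract the recurrence from the conductor computed in the previous lemma. First I would recall that, after the chosen identification $\Z^2/\gen{d_i} \cong \Z$, the ring $L_i(\Z^2,k) = k[\Z^2/\gen{d_i}] \cong k[\Z^2]/(d_i-1)$ becomes the Laurent polynomial ring $k[x_i,x_i^{-1}]$, where $x_i$ corresponds to a generator of $\Z^2/\gen{d_i}$, i.e. to translation of line-indices by $1$. Under the bijection $W$ of Remark \ref{weight functions}, a dependency $r$ restricted to the $i$-th block is a $k$-linear functional on $k[x_i,x_i^{-1}]$, and $W_i(r)$ is exactly the sequence of values $(r(x_i^n))_{n\in\Z}$. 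Multiplication by $x_i$ on $L_i$ dualizes to the shift operator on these sequences, so a linear recurrence relation $\sum_{j} a_j\, W_i(r)(n+j) = 0$ holding for all $n$ is precisely the statement that the functional $r|_{L_i}$ annihilates the ideal $\bigl(\sum_j a_j x_i^j\bigr)\, k[x_i,x_i^{-1}]$.

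Next I would bring in the fact that $r$ is a dependency, i.e. $r$ factors through $\cok{\sigma_{\Z^2,k}} = L(\Z^2,k)/\im{\sigma_{\Z^2,k}}$, equivalently $r$ annihilates $\im{\sigma_{\Z^2,k}}$. The key point is the previous lemma: the conductor $\ff_k$ is contained in $\im{\sigma_{\Z^2,k}}$, and its $i$-th component is $\overline{D_i}\, k[\Z^2]/(d_i-1)$ where $D_i = \prod_{j\neq i}(d_j-1)$. Since the image of $(0,\dots,0,\ell_i,0,\dots,0)$ lies in $L(\Z^2,k)$ only when it is in $\im\sigma$, I need to be a little careful: what is actually true and what I want is that $r$ annihilates the $i$-th component $\ff_{k,i} = \overline{D_i}\, k[x_i,x_i^{-1}]$ of the conductor, because $\ff_{k,i} \subset \ff_k \subset \im{\sigma_{\Z^2,k}}$ and $\ff_k$ decomposes as a direct sum over $i$, so $(0,\dots,\overline{D_i} g,\dots,0) \in \im\sigma$ for every $g$. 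Hence $r|_{L_i}$ kills the ideal generated by $\overline{D_i}$ in $k[x_i,x_i^{-1}]$. It remains to identify $\overline{D_i}$ explicitly as a Laurent polynomial in $x_i$: for $j\neq i$, the element $d_j - 1 \in k[\Z^2]$ maps to $x_i^{m_{ij}} - 1$ in $k[\Z^2]/(d_i-1) \cong k[x_i,x_i^{-1}]$, where $m_{ij}$ is the image of $d_j$ under $\Z^2 \to \Z^2/\gen{d_i} \cong \Z$; since $d_i, d_j$ are independent, $m_{ij}\neq 0$. Therefore $\overline{D_i} = \prod_{j\neq i}(x_i^{m_{ij}} - 1)$, which is a nonzero Laurent polynomial (as $k$ is not the zero ring and the $x_i^{m_{ij}}-1$ are nonzerodivisors), and the associated recurrence — whose characteristic "polynomial" is $\prod_{j\neq i}(x_i^{m_{ij}} - 1)$ — is non-trivial and manifestly independent of $r$.

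Putting it together: for any dependency $r$ and any $n\in\Z$, applying $r$ to $x_i^n \cdot \overline{D_i} = \overline{D_i}\cdot x_i^n \in \ff_{k,i} \subset \im\sigma$ gives $0$; expanding $\overline{D_i} = \sum_m a_m x_i^m$ this reads $\sum_m a_m\, W_i(r)(n+m) = 0$ for all $n$, which is the desired non-trivial linear recurrence with fixed coefficients $(a_m)$. I would then remark that the degree of this recurrence is at most $\sum_{j\neq i} |m_{ij}|$, matching the "polynomial coefficient" phenomenon observed in Example \ref{global_example} (a recurrence whose characteristic polynomial is a product of $t-1$ factors of the form $x^{m}-1$ forces the sequence to be, on suitable residue classes, polynomial of degree $< t-1$, i.e. of degree at most $t-2$).

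The main obstacle I anticipate is purely bookkeeping rather than conceptual: pinning down the isomorphism $k[\Z^2]/(d_i-1) \cong k[x_i,x_i^{-1}]$ compatibly with the chosen identification $\Z^2/\gen{d_i}\cong\Z$ from the start of the section, and then verifying that $W_i(r)(n) = r(x_i^n)$ so that multiplication by $x_i$ genuinely corresponds to the index shift $n \mapsto n+1$ (with the correct sign/orientation). Once that dictionary is fixed, the annihilation of $\overline{D_i}\,k[x_i,x_i^{-1}]$ by $r$ is immediate from the conductor lemma, and the translation back to a recurrence is formal. A secondary point to handle with a line of justification is that $\overline{D_i}$ is genuinely nonzero and that the recurrence is "non-trivial" in the intended sense (leading and trailing coefficients invertible, or at least the Laurent polynomial not identically zero), which follows because each factor $x_i^{m_{ij}}-1$ has constant term $-1$ and top term $\pm x_i^{m_{ij}}$, both units in $k$.
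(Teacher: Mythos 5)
Your proposal is correct and takes essentially the same route as the paper: both observe that a dependency $r$ kills the conductor, hence kills $x^n\overline{D_i}$ for all $n$ in the $i$-th component $\overline{D_i}\,k[x,x^{-1}]$, and then read off the recurrence $\sum_m a_m W_i(r)(n+m)=0$ from the coefficients of $\overline{D_i}$. Your explicit factorization $\overline{D_i}=\prod_{j\neq i}(x^{m_{ij}}-1)$ and the unit leading/trailing coefficient remark establishing non-triviality simply make explicit what the paper records immediately after its proof.
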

\begin{proof}
The isomorphism $\Z^2 / \gen{d_i} \cong \Z$ gives rise to an isomorphism
\[\quo{k[\Z^2]}{d_i - 1} \cong k\left[\quo{Z^2}{\gen{d_i}}\right]
\cong k[x, x^{-1}] \]
of $L_i(\Z^2, k)$ with the Laurent polynomial ring $k[x,x^{-1}]$. Write
$\overline{D_i} = \sum_n a_n x^n $ in $k[x, x^{-1}]$.

Let $r \in \Dep{\Z^2, k}$ be a dependency. We consider the map
$r_i: L_i(\Z^2, k) \ra k$ induced by $r$. As $\im{\sigma_{\Z^2, k}}$ is
in the kernel of $r$, we have $\ff_k \subset \ker{r}$. As
$x \in k[x, x^{-1}]$ is a unit, we see that $x^n \overline{D_i}$ must be in
$\ker{r_i}$ for all integers $n$.

Write $c$ for the weight function $W_i(r)$ from $\Z$ to $k$. From the
definitions, we have for all $n \in \Z$ that $r_i(x^n) = c(n)$.
Let $m \in Z$. Then we must have
\[ 0 = r_i(x^m \overline{D_i}) =
r_i(\sum_n a_n x^{m + n}) = \sum_n a_n c(m + n).\]
This is saying precisely what we want, namely that $c$ satisfies a
linear recurrence relation whose coefficients are the $a_n$. Clearly,
these $a_n$ do not depend on $r$, only on $D_i$ and maybe on $k$.
\end{proof}

In fact, one computes that for $i = 1, \ldots t$ we have
\[ \overline{D_i} = \prod_{j \neq i} (x^{\det(d_i, d_j)} - 1).\]
From this, one easily sees that the leading and trailing coefficients of
$\overline{D_i}$ are $\pm 1$. Therefore, no matter what $k$ is, the
recurrence relation can be used to uniquely determine the sequence from
any sufficiently large set of consecutive coefficients. In fact, all the
coefficient functions can be expressed in a closed form
\[ [W_i(r)](s) = f_{s\ \mathrm{mod}\ m}(s)\]
where the $f$ are polynomials. The maximal degrees of these polynomials and
the value of $m$ depend only on the $d_i$ and the characteristic of $k$.

\section{An example}%
\label{example_section}%
We revisit the example from \cite{Hajdu01} that was discussed in Section
\ref{global_example}. It concerns the directions $d_1 = (1,0)$, $d_2 =
(0,1)$, $d_3 = (1,1)$ and $d_4 = (1,-1)$. For simplicity, we take $k =
\Q$, but we will make some comments on how to deal with the case $k =
\Z$.

We identify $T(\Z^2,k)$ with $k[x,x^{-1},y,v^{-1}]$.
Note that for each $i$, we have $\Z^2 / \gen{d} \cong \Z$. We pick
isomorphisms $L_i(\Z^2, k) = k[z,z^{-1}]$ in such a way that the
components of the line sum map are the maps
$k[x,x^{-1},y,y^{-1}] \ra k[z,z^{-1}]$ given by
\[\begin{array}{c|c|c|c}
i & \mbox{map} & x \mapsto & y \mapsto \\
\hline
1 & r & 1 & z \\
2 & c & z & 1 \\
3 & t & z & z^{-1} \\
4 & u & z & z
\end{array}\]
The line sum map is given by
\[
\sigma = (r,c,t,u) : k[x,x^{-1},y,y^{-1}] \lra \left(k[z,z^{-1}]\right)^4
\]

The maps $r, c, t$ and $u$ are related to the line sums described in
Section \ref{global_example} in a straightforward manner. Let $f$ and
the $r_i, c_i, t_i$ and $u_i$ be as in that section. Put
$F = \sum_{i,j} f(i,j)x^iy^j$. Then we have $r(F) = \sum_i r_i z^i$ and
likewise for the other maps.

We compute
\[\begin{array}{lcl}
D_1 = (y - 1)(xy - 1)(xy^{-1} - 1) & \quad &
r(D_1) = -z^{-1}(z-1)^3 \\
D_2 = (x - 1)(xy - 1)(xy^{-1} - 1) &&
c(D_2) = (z - 1)^3 \\
D_3 = (x - 1)(y - 1)(xy^{-1} - 1) &&
t(D_3) = (z - 1)^3(z + 1) \\
D_4 = (x - 1)(y - 1)(xy - 1) &&
u(D_4) = (z-1)^3(z+1)
\end{array}\]

Let $M = M_1 \oplus \cdots \oplus M_4$ be the quotient vector space
\[M = \frac{k[z,z^{-1}]}{r(D_1)} \oplus
\frac{k[z,z^{-1}]}{c(D_2)} \oplus
\frac{k[z,z^{-1}]}{t(D_3)} \oplus
\frac{k[z,z^{-1}]}{u(D_4)} \]
and $\pi = (\pi_1,\ldots,\pi_4)$ be the quotient map
$(k[z,z^{-1}])^4 \ra M$.
As discussed in the previous section, there is a surjective map
$M \ra \cok{\sigma}$. This means we can realize $\Dep{\Z^2, k}$ as a
subspace of $\Hom{M, k}$.

A basis for $\Hom{k[z,z^{-1}]/(z-1)^3, k}$ is given by the maps
\[v_1 : z^i \mapsto 1 \qquad
v_2 : z^i \mapsto i \qquad
v_3 : z^i \mapsto i^2. \]
Let $e : \Z \mapsto \Z$ be the map that sends $n$ to $0$ if
$n$ is odd, and to $1$ if it is even.
A basis for $\Hom{k[z,z^{-1}]/(z-1)^3(z+1), k}$ is given by
\[ w_1 : z^i \mapsto e(i) \qquad
w_2 : z^i \mapsto 1-e(i) \qquad
w_3 : z^i \mapsto i \qquad
w_4 : z^i \mapsto i^2.\]
These maps together give a basis for $\Hom{M,k}$ consisting of $14$
elements:
\begin{itemize}
\item $v_{1,1}$, $v_{1,2}$ and $v_{1,3}$ acting on the first coordinate;
\item $v_{2,1}$, $v_{2,2}$ and $v_{2,3}$ acting on the second coordinate;
\item $w_{1,1}, \ldots, w_{1,4}$ acting on the third coordinate and
\item $w_{2,1}, \ldots, w_{2,4}$ acting on the fourth coordinate.
\end{itemize}
These maps correspond to the sums of line sums that also come up in
Section \ref{global_example}. For example $v_{1,1}$ sends $F$ to
$\sum_i r_i$ and $w_{2,3}$ sends $F$ to $\sum_i i^2 u_i$.

The dependencies form a subvector space of $\Hom{M,k}$ of dimension
$7$. What we still have to do is to determine which linear combinations
of $v_{i,j}$'s and $w_{i,j}$'s correspond to dependencies. One way to do
this is to write down the restrictions coming from the fact that tables
of the form $x^iy^j$ must be sent to $0$ by a dependency. We will see in
Section \ref{finite_convex_section} that we only have to check finitely
many such tables before we have a complete set of restrictions.

Another way to find these restriction is to consider the compositions of
the $v$'s and $w$'s with $\pi \circ \sigma$, i.e., the maps they induce
in $\Hom{k[x,x^{-1},y,y^{-1}], k}$. The dependencies are precisely those
relations that go to $0$ under this composition. The maps we obtain in
this way are
\[\begin{array}{c|ccc|c}
\mbox{map} & x^iy^j \mapsto & \qquad &
\mbox{map} & x^iy^j \mapsto \\
\hline
v_{1,1} & 1 & & v_{2,1} & 1 \\
v_{1,2} & i & & v_{2,1} & j \\
v_{1,3} & i^2 & & v_{2,1} & j^2 \\
w_{1,1} & e(i-j) & & w_{2,1} & e(i+j) \\
w_{1,2} & 1-e(i-j) & & w_{2,2} & 1-e(i+j) \\
w_{1,3} & i-j & & w_{2,3} & i+j \\
w_{1,3} & (i-j)^2 & & w_{2,3} & (i+j)^2
\end{array}\]

From this table, one easily reads off a basis for the dependencies. For
example, we can take
\begin{eqnarray*}
v_{1,1} = v_{2,1} &=& w_{1,1} + w_{1,2} = w_{2,1} + w_{2,2} \\
w_{1,1} &=& w_{2,1} \\
v_{1,2} - v_{2,2} &=& w_{1,3} \\
v_{1,2} + v_{2,2} &=& w_{2,3} \\
2 v_{1,3} + 2 v_{2,3} &=& w_{1,4} + w_{2,4}.
\end{eqnarray*}
These correspond to the dependencies described in Section
\ref{global_example}.

If we want to write down a basis for the dependencies not over $\Q$ but
over $\Z$ or some other ring, we have to be a little more careful. The
maps $v_1, \ldots v_3$ do not form a basis of $\Hom{k[z,z^{-1}]/(z-1)^3,
k}$ if $k = \Z$. The map sending $z^i$ to $\frac12i(i-1)$ is in this
module, but it is equal to $\frac12(v_3 - v_2)$, which is not a
$\Z$-linear combination of the $v$'s.

A basis that works regardless of
the ring $k$ is found as follows. Note that
\[\quo{k[z,z^{-1}]}{(z-1)^3} =
k \cdot 1 \oplus k \cdot z \oplus k \cdot z^2 \]
This choise of a basis also gives a basis for the $k$-dual. This basis
works independently of $k$. The price we pay for this more general
approach is that the formulas that come out aren't as nice, making it
harder to find the dependencies by hand. The linear algebra involved
does not become more difficult.

\section{The comparison sequence}%
\label{comparison_sequence_section}%

Let $A \subset B \subset \Z^2$. Our aim in this section is to compare
the kernels and cokernels of $\sigma_{A,k}$ and $\sigma_{B,k}$.

Put $T(B/A, k) = \freemod{k}{B \setminus A}$ and
$L(B/A, k) = \bigoplus_{i=1}^{t}\freemod{k}{\L_i(B) \setminus \L_i(A)}$.
Looking at the bases for the spaces involved, it is clear that there are
direct sum decompositions
$T(B,k) = T(A,k) \oplus T(B/A,k)$ and $L(B,k) = L(A,k) \oplus L(B/A,k)$.

This means we can represent $\sigma_{B,k}$ as a two-by-two matrix of
$k$-linear maps
\[ \sigma_{B,k} =
\left(\begin{matrix} p & q \\ r & s\end{matrix} \right),\]
where $p : T(A,k) \ra L(A,k)$, $q: T(B/A, k) \ra L(A, k)$,
$r : T(A) \ra L(B/A,k)$, and $s: T(B/A, k) \ra L(B/A,k)$ are the
restrictions and projections of $\sigma_{B,k}$ to the appropriate
subspaces. The usual matrix multiplication rule
\[\left(\begin{matrix} p & q \\ r & s\end{matrix} \right)
\left(\begin{matrix}x\\y\end{matrix}\right) =
\left(\begin{matrix}u\\v\end{matrix}\right)
\]
holds when we have $x \in T(A, k)$, $y \in T(B/A, k)$, $u \in L(A, k)$,
and $v \in L(B/A, k)$ such that $\sigma_{B,k}(x \oplus y) = u \oplus v$.

As $L(B/A, k)$ consists precisely of those lines through $B$ that do not
intersect $A$, we have $r = 0$. Similarly, $p$ is just the map
sending tables on $A$ to their line sums, so $p = \sigma_{A,k}$.
The other two maps, $q$ and $s$ encode interesting information about the
relative situation, so we will give them more descriptive names
\[\sigma_{B/A,k} : T(B/A, k) \lra L(B/A, k)
\quad\mbox{(the \emph{relative line sum map})}\]
and
\[\delta_{B/A,k} : T(B/A, k) \lra L(A,k)
\quad\mbox{(the \emph{interference map})}.\]

\begin{lemma}[The comparison sequence]
There is a long exact sequence
\[
\begin{array}{l}
0 \ra \ker{\sigma_{A,k}}
\ra \ker{\sigma_{B,k}}
\ra \ker{\sigma_{B/A,k}}\\
\quad\ra \cok{\sigma_{A,k}}
\ra \cok{\sigma_{B,k}}
\ra \cok{\sigma_{B/A,k}}
\ra 0.
\end{array}
\]
The map
$ \overline{\delta_{B/A,k}} : \ker{\sigma_{B/A,k}} \ra \cok{\sigma_{A,k}}$
comes from the interference map $\delta_{B/A, k}$ defined above.
\end{lemma}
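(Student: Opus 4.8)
The plan is to recognize this as an instance of the snake lemma applied to the block-triangular decomposition of $\sigma_{B,k}$. First I would set up the two short exact sequences that express the direct-sum decompositions: the rows
\[ 0 \ra T(A,k) \ra T(B,k) \ra T(B/A,k) \ra 0 \]
and
\[ 0 \ra L(A,k) \ra L(B,k) \ra L(B/A,k) \ra 0, \]
both of which are split by construction (the bases of the larger modules are partitioned by those of the two summands). I would then check that the three vertical maps $\sigma_{A,k}$, $\sigma_{B,k}$, $\sigma_{B/A,k}$ form a commutative ladder between these rows. Commutativity of the left square says that the inclusion $T(A,k)\hookrightarrow T(B,k)$ followed by $\sigma_{B,k}$ equals $\sigma_{A,k}$ followed by $L(A,k)\hookrightarrow L(B,k)$; since $r=0$ this is exactly the statement that the first column of the matrix is $\binom{\sigma_{A,k}}{0}$. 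Commutativity of the right square says $\sigma_{B/A,k}$ is the composite $T(B/A,k)\hookrightarrow T(B,k)\xrightarrow{\sigma_{B,k}} L(B,k)\twoheadrightarrow L(B/A,k)$, which is just the definition of the lower-right block $s$.

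Once the ladder is in place, the snake lemma produces precisely the claimed six-term exact sequence
\[ 0 \ra \ker{\sigma_{A,k}} \ra \ker{\sigma_{B,k}} \ra \ker{\sigma_{B/A,k}} \ra \cok{\sigma_{A,k}} \ra \cok{\sigma_{B,k}} \ra \cok{\sigma_{B/A,k}} \ra 0, \]
with exactness on the left because $T(A,k)\to T(B,k)$ is injective and exactness on the right because $L(B,k)\to L(B/A,k)$ is surjective. The only remaining point is to identify the connecting homomorphism with $\overline{\delta_{B/A,k}}$. I would do this by running the usual diagram chase: given $y \in \ker{\sigma_{B/A,k}} \subset T(B/A,k)$, lift it to $0\oplus y \in T(B,k)$, apply $\sigma_{B,k}$ to get $\sigma_{B,k}(0\oplus y) = \delta_{B/A,k}(y)\oplus \sigma_{B/A,k}(y) = \delta_{B/A,k}(y)\oplus 0$, which lies in the image of $L(A,k)\hookrightarrow L(B,k)$; its preimage there is $\delta_{B/A,k}(y)$, and the connecting map sends $y$ to the class of this element in $\cok{\sigma_{A,k}}$. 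That is exactly $\overline{\delta_{B/A,k}}$, so the identification is immediate from the matrix form of $\sigma_{B,k}$.

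I do not expect a genuine obstacle here; the content is entirely the snake lemma, and the work is just bookkeeping to confirm that the vertical maps really do fit into a commutative diagram with split exact rows. The one spot that deserves a careful sentence rather than a wave of the hand is the vanishing $r=0$: it is what makes the left square commute and simultaneously what makes the connecting map come out as $\delta_{B/A,k}$ rather than some correction term, so I would state explicitly that a line in a direction $d_i$ that meets $B\setminus A$ but not $A$ contributes to $L(B/A,k)$, hence $\sigma_{B,k}$ carries $T(A,k)$ entirely into $L(A,k)$. If one prefers not to invoke the snake lemma as a black box, the whole argument can be written out as a direct diagram chase in a page, but citing it (e.g.\ \cite[Ch.\ III]{Lang}) is cleaner and in keeping with the style of the paper.
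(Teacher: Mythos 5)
Your proposal is correct and is essentially the paper's own argument: the paper proves the lemma by citing the Snake Lemma applied to exactly this ladder of split short exact sequences, and your identification of the connecting homomorphism with $\overline{\delta_{B/A,k}}$ via the matrix form of $\sigma_{B,k}$ (using $r=0$) is the intended, if unstated, bookkeeping. Nothing is missing.
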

\begin{proof}
This is an application of the Snake Lemma
(See, for example, \cite[Ch. III.9, Lemma 9.1.]{Lang}).
\end{proof}

The extension $B/A$ is called \emph{non-interfering} if it satisfies
the following (equivalent) conditions:
\begin{enumerate}
\item the map $\overline{\delta_{B/A,k}}$ is the zero map;
\item the map $\ker{\sigma_{B,k}} \ra \ker{\sigma_{B/A,k}}$ is surjective;
\item the map $\cok{\sigma_{A,k}} \ra \cok{\sigma_{B,k}}$ is injective.
\end{enumerate}

\section{Finite, convex $A$}%
\label{finite_convex_section}%

A subset $C \subset \R^2$ is called \emph{convex} if for any
$x, y \in C$ the line segment between $x$ and $y$ is completely
contained in $C$. The \emph{convex hull} of a subset $S \subset \R^2$ is
the smallest convex subset $C$ of $\R^2$ containing $S$. We write $H(S)$
for the convex hull of $S$.
We call $A \subset \Z^2$ convex if $A = H(A) \cap \Z^2$.

We call $C \subset \R^2$ a \emph{convex polygon} if $C = H(S)$ for some
finite $S \subset \R^2$. The set of \emph{corners} of a convex polygon
$C$ is the smallest set $S$ such that $H(S) = C$.

Let $C_1, C_2 \subset \R^2$ be convex polygons. Then
\[C_1 + C_2 = \set{c_1 + c_2 \st c_1 \in C_1, c_2 \in C_2}\]
is also a convex polygon.  Let $s$ be a corner of $C_1 + C_2$. Then $s$
can be written in a unique way as $s_1 + s_2$ with $s_1 \in C_1$ and
$s_2 \in C_2$. Moreover, $s_1$ and $s_2$ are corners of $C_1$ and $C_2$
respectively.

Let $f \in k[\Z^2]$ and write $f = \sum_{x \in \Z_2} f_x x$. Then the
\emph{support} of $f$ is the set
\[\supp{f}=\set{x \in \Z^2 \st f_x \neq 0}.\]
Note that $\supp{f}$ is always a finite set.
The \emph{polygon} of $f$ is
\[P(f) = H(\supp{f}).\]
It is a convex polygon. Let $s$ be a corner of $P(f)$, then we say that
$s$ is a \emph{strong} corner of $P(f)$ if $f_s$ is not a zero divisor.
We say that $f$ \emph{has strong corners} if all corners of $P(f)$ are
strong.

\begin{lemma}
Let $f, g \in k[\Z^2]$ and suppose that $f$ has strong corners. Then
\[P(fg) = P(f) + P(g).\]
If $g$ also has strong corners, $fg$ has strong corners.
\end{lemma}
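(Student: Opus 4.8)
The plan is to analyze the top-degree (in a suitable sense) part of the product $fg$ by working corner-by-corner on the Minkowski sum $P(f)+P(g)$. The key structural fact, already recorded in the excerpt, is that every corner $s$ of $P(f)+P(g)$ decomposes \emph{uniquely} as $s = s_1 + s_2$ with $s_1$ a corner of $P(f)$ and $s_2$ a corner of $P(g)$. So I would first fix a corner $s$ of $P(f)+P(g)$, take the unique such decomposition, and compute the coefficient $(fg)_s = \sum_{x+y=s} f_x g_y$. The claim is that the only pair $(x,y)$ with $x \in \supp{f}$, $y \in \supp{g}$, and $x+y=s$ is $(s_1,s_2)$; granting that, $(fg)_s = f_{s_1} g_{s_2}$, which is a product of a non-zero-divisor $f_{s_1}$ (since $f$ has strong corners) with the nonzero $g_{s_2}$, hence nonzero — so $s \in \supp{fg} \subseteq P(f)+P(g)$, and $s$ is a strong corner of $P(fg)$ whenever $g$ also has strong corners.

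The heart of the argument, and the step I expect to be the main obstacle, is the uniqueness of the decomposition \emph{at the level of support points} rather than just corners. The given fact only asserts uniqueness when $s$ is a corner of the Minkowski sum and the summands are required to be corners of $P(f)$ and $P(g)$. I need more: that no pair of \emph{arbitrary} support points $x\in\supp f$, $y\in\supp g$ can sum to $s$ except the corner pair. The standard way to get this is via a supporting functional: since $s$ is a corner of the convex polygon $P(f)+P(g)$, there is a linear functional $\lambda\colon\R^2\to\R$ that attains its strict maximum over $P(f)+P(g)$ uniquely at $s$. For any $x\in P(f)$ and $y\in P(g)$ we have $\lambda(x)\le\max_{P(f)}\lambda$ and $\lambda(y)\le\max_{P(g)}\lambda$, with the two maxima summing to $\lambda(s)$; if moreover $x+y=s$ then both inequalities must be equalities, so $x$ lies on the face of $P(f)$ where $\lambda$ is maximized and likewise for $y$. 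I then need that these faces are each a single point, namely $s_1$ and $s_2$. This follows because if either face were an edge, one could perturb to produce a second decomposition of $s$ into corners, contradicting the uniqueness statement — or, more directly, one argues that a corner of a Minkowski sum can only arise from corners of the summands and that the supporting functional separating the corner must also be "corner-separating" for each summand. I would phrase this carefully, perhaps by choosing $\lambda$ generic among functionals maximized at $s$ so that it is maximized at a unique point of each of $P(f)$ and $P(g)$.

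Once the support-level uniqueness is in hand, the rest is immediate: for every corner $s$ of $P(f)+P(g)$ we get $(fg)_s = f_{s_1} g_{s_2}\neq 0$, so all corners of $P(f)+P(g)$ lie in $\supp{fg}$; combined with the general inclusion $\supp{fg}\subseteq\supp f+\supp g\subseteq P(f)+P(g)$ and convexity, taking convex hulls gives $P(fg) = H(\supp{fg}) = P(f)+P(g)$. For the strong-corners conclusion, note that the corners of $P(fg)=P(f)+P(g)$ are exactly the points $s=s_1+s_2$ above, and at each such corner the coefficient $f_{s_1}g_{s_2}$ is a product of two non-zero-divisors (using that $f$ and $g$ both have strong corners), hence again a non-zero-divisor, so $fg$ has strong corners. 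I would present the supporting-functional lemma first, then deduce both halves of the statement in one short paragraph.
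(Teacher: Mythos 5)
Your proof is correct and follows essentially the same route as the paper: at a corner $s$ of $P(f)+P(g)$ the coefficient of $fg$ collapses to $f_{s_1}g_{s_2}$, which is nonzero because $f_{s_1}$ is a non-zero-divisor, and is itself a non-zero-divisor when $g$ also has strong corners. Your supporting-functional digression is sound but not needed: the Minkowski-sum fact recorded in the paper already asserts that the decomposition $s=s_1+s_2$ is unique among \emph{all} pairs $s_1\in P(f)$, $s_2\in P(g)$ (not merely corner pairs), so the support-level uniqueness you were worried about follows at once from $\supp{f}\subset P(f)$ and $\supp{g}\subset P(g)$.
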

\begin{proof}
The inclusion $P(fg) \subset P(f) + P(g)$ is obvious. For the other
inclusion, suppose that $s$ is a corner of $P(f) + P(g)$. Then
the coefficient of $fg$ at $s$ is
\[\sum_{a + b = s} f_a g_b = f_{s_f} g_{s_g}, \]
where $s_f$ and $s_g$ are the unique corners of $P(f)$ and $P(g)$
respectively such that $s = s_f + s_g$. We see that this coefficient is
non-zero as $f_{s_f}$ is not a zero divisor, so $s \in P(fg)$. This
shows that $P(f) + P(g) \subset P(fg)$. Moreover, if $g$ also has strong
corners, $g_{s_g}$ is also not a zero divisor and so $f_{s_f} g_{s_g}$
is not a zero divisor.
\end{proof}

\begin{lemma}\label{kernel generator}%
The generator of $\ker{\sigma_{\Z^2, k}}$,
\[D = (d_1 - 1) \cdots (d_t - 1),\]
has strong corners. Moreover, $\Delta = P(D)$ does not depend on $k$.
\end{lemma}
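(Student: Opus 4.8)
The plan is to factor $D = \prod_{i=1}^t (d_i - 1)$ as a product of binomials and track the support of each factor explicitly. First I would observe that for a single direction $d = (a,b) \in \Z^2 \setminus \set0$, the element $d - 1 \in k[\Z^2] = k[u,u^{-1},v,v^{-1}]$ equals $u^a v^b - 1$, whose support is the two-point set $\set{(0,0),(a,b)}$ and whose polygon is the line segment from the origin to $(a,b)$. Both coefficients of $d-1$ are $\pm 1$, hence units, so in particular not zero divisors; thus each $d_i - 1$ has strong corners, and its polygon is a segment that does not depend on $k$.

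Next I would invoke the preceding lemma (on products of elements with strong corners) inductively: since $d_1 - 1$ has strong corners and $d_2 - 1$ has strong corners, their product has strong corners and $P((d_1-1)(d_2-1)) = P(d_1-1) + P(d_2-1)$; continuing, $D$ has strong corners and $P(D) = \sum_{i=1}^t P(d_i - 1)$, the Minkowski sum of the $t$ segments $[0, d_i]$. This immediately gives both claims of the lemma: $D$ has strong corners, and $\Delta = P(D)$ is a Minkowski sum of segments determined purely by the integer vectors $d_1, \ldots, d_t$, with no reference to $k$. Such a Minkowski sum of segments is exactly a zonotope, which is a clean geometric object, but for the purposes of this lemma we only need that it is independent of $k$.

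The one point requiring a little care — though not really an obstacle — is the degenerate-ring subtlety hidden in the phrase "not a zero divisor": when $k$ is the zero ring everything collapses, but the standing hypothesis throughout the paper is that $k$ is not the zero ring, so $1 \in k$ is a nonzero element and, being a unit, is not a zero divisor; hence the coefficients $\pm 1$ of each $d_i - 1$ are genuinely non-zero-divisors and the strong-corners machinery applies. The only genuinely substantive step is the inductive application of the product lemma, and that has already been established; so the proof is essentially a two-line reduction to that lemma together with the trivial computation of $P(d_i - 1)$. I would write it out in roughly that form, perhaps remarking that $\Delta$ is the zonotope generated by $d_1, \ldots, d_t$.
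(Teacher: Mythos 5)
Your proof is correct, and for the second claim it takes a slightly different route from the paper. For the strong-corners claim you and the paper argue identically: each $d_i-1$ is a $2$-gon with coefficients $\pm1$ (units, hence non-zero-divisors since $k$ is not the zero ring), and the preceding product lemma, applied inductively, gives $D$ strong corners. For the independence of $\Delta$ from $k$, the paper instead base-changes: it sets $D_\Z=(d_1-1)\cdots(d_t-1)\in\Z[\Z^2]$, notes that the corner coefficients of $D_\Z$ are $\pm1$ and never map to $0$ in $k$, and concludes $P(D)=P(D_\Z)$. You avoid this reduction entirely by using the other half of the product lemma, namely the polygon formula $P(fg)=P(f)+P(g)$, to get
\[
\Delta \;=\; P(D) \;=\; P(d_1-1)+\cdots+P(d_t-1),
\]
the Minkowski sum of the segments from $0$ to $d_i$, which is manifestly determined by $d_1,\ldots,d_t$ alone (the zonotope they generate). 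Both arguments lean on the same product lemma and the same $\pm1$ observation; yours yields an explicit description of $\Delta$ as a byproduct, which is pleasant, while the paper's base-change phrasing makes explicit the comparison between $D$ and its integral lift, a picture it reuses later (e.g.\ when comparing ranks over $\Z$ and $\F_p$). Your handling of the zero-ring caveat is also fine and matches the paper's standing hypothesis.
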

\begin{proof}
The polygon of $d_i - 1$ is a $2$-gon with coefficients $\pm 1$ at the
corners, so $d_i - 1$ has strong corners. The previous lemma then
implies that $D$ has strong corners.

Let $D_\Z = (d_1 - 1) \cdots (d_t - 1) \in \Z[\Z^2]$, then $D$ is the
image of $D_\Z$ under the natural map $\Z[\Z^2] \ra k[\Z^2]$. Note that
the corners of $D_\Z$ will have coefficients $\pm 1$, as this is true
for all the factors $d_1 - 1$. This means that $P(D) = P(D_\Z)$ does not
depend on $k$, as $\pm 1$ never maps to $0$ in $k$.
\end{proof}

\begin{theorem}
\label{local_kernel_and_cokernel}
Let $A \subset \Z^2$ be finite and convex. Then $\ker{\sigma_{A,k}}$ and
$\cok{\sigma_{A,k}}$ are free $k$-modules of finite rank. The ranks of
these modules do not depend on $k$.
\end{theorem}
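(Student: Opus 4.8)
The plan is to relate the situation for a finite convex $A$ to the global case $A = \Z^2$, for which Theorem \ref{global_kernel_and_cokernel} gives complete information, by choosing $B = \Z^2$ in the comparison sequence of Section \ref{comparison_sequence_section}. Unfortunately $B = \Z^2$ is infinite, so I cannot apply the comparison sequence verbatim; instead I would first find a finite convex $B$ with $A \subset B$ that is \emph{large enough} that the behaviour of $\sigma_{B,k}$ near $A$ already looks global. Concretely, the generator $D = (d_1-1)\cdots(d_t-1)$ of $\ker{\sigma_{\Z^2,k}}$ has a fixed polygon $\Delta = P(D)$ independent of $k$ by Lemma \ref{kernel generator}; I would take $B \supset A + \Delta$ (say $B = H(A + \Delta) \cap \Z^2$, which is finite and convex). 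The key point is that for such a $B$, every switching component on $\Z^2$ whose support meets $A$ is, up to translation, visible inside $B$, and conversely every element of $\ker{\sigma_{B,k}}$ supported so as to involve $A$ comes from the global kernel.

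The main steps are then as follows. First, I would prove that $\ker{\sigma_{A,k}}$ is a \emph{free} $k$-module: it is a submodule of the finite free module $T(A,k)$, but that alone does not give freeness over a general commutative ring, so instead I would identify $\ker{\sigma_{A,k}}$ explicitly. Since $D$ has strong corners and $P(D) = \Delta$, for any table $h$ on $\Z^2$ the product $D\cdot h$ has polygon $\Delta + P(h)$; hence $D\cdot h$ is supported in $A$ precisely when $\Delta + P(h) \subset H(A)$, i.e. when $h$ is supported in the convex region $A' := \{x : x + \Delta \subset H(A)\} \cap \Z^2$. This gives $\ker{\sigma_{A,k}} = D \cdot T(A',k)$, and because $D$ has strong corners the multiplication-by-$D$ map $T(A',k) \to T(A,k)$ is \emph{injective} (its leading corner coefficient is not a zero divisor, so it cannot kill a nonzero table — argue via the corner of the polygon of a putative nonzero element of the kernel). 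Therefore $\ker{\sigma_{A,k}} \cong T(A',k)$ is free of rank $|A'|$, and $|A'|$ is manifestly independent of $k$ since $\Delta$ is.

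Next, for the cokernel I would run the comparison sequence for the pair $A \subset B$ with $B$ as above. By Theorem \ref{local_kernel_and_cokernel} applied inductively — or rather, since $B$ is itself finite convex, I would instead compare $A \subset B \subset \Z^2$ formally: the composite reasoning shows $\cok{\sigma_{A,k}}$ sits in an exact sequence involving $\cok{\sigma_{B,k}}$ and $\cok{\sigma_{B/A,k}}$, and by the choice of $B$ the relative piece $\sigma_{B/A,k}$ is an \emph{isomorphism} onto its target (the lines through $B \setminus A$ in each direction are in bijection, via the global structure, with the tables on $B \setminus A$ once $B$ is padded by $\Delta$ — here one uses that no nontrivial global switching component is supported in $B \setminus A$). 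Hence $\cok{\sigma_{A,k}} \cong \cok{\sigma_{B,k}}$, and iterating (or arguing directly that for $B$ containing enough translates of $\Delta$ the cokernel stabilises) reduces the computation to a fixed combinatorial count. Finiteness of the rank of $\cok{\sigma_{A,k}}$ follows because $L(A,k)$ is finite free and $\cok{\sigma_{A,k}}$ is a quotient of it by the submodule $\im{\sigma_{A,k}}$; freeness follows once I show the relevant exact sequences split, which they do because at each stage the quotients are built from group rings $k[\Z^2/\gen{d_i,d_j}]$ that are free $k$-modules, exactly as in the proof of Theorem \ref{global_kernel_and_cokernel}. All ranks appearing are expressed in terms of $|A'|$, $|A|$, the $|\!\det(d_i,d_j)|$ and the fixed polygon $\Delta$, none of which involve $k$.

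The hard part will be establishing that the relative line sum map $\sigma_{B/A,k}$ (for a suitably padded finite convex $B$) is an isomorphism, equivalently that the extension $B/A$ is non-interfering and that $\sigma_{B/A,k}$ has trivial kernel and cokernel. This is where convexity is essential: one needs that a table supported in the "collar" $B \setminus A$ cannot be a global switching component unless it is zero, which uses the strong-corners analysis of $D$ together with the fact that the collar, being a difference of convex sets each containing translates of $\Delta$, is too thin to contain the polygon $\Delta$. I expect this collar argument — making precise how wide the padding $B \supset A + \Delta$ must be so that both $\ker{\sigma_{B/A,k}}$ and $\cok{\sigma_{B/A,k}}$ vanish — to require the bulk of the work; once it is in place, freeness and $k$-independence of the ranks are formal consequences of the splitting already used in Theorem \ref{global_kernel_and_cokernel}.
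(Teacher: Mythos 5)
Your treatment of the kernel is essentially the paper's own argument and is fine: $\ker{\sigma_{A,k}} = \ker{\sigma_{\Z^2,k}} \cap T(A,k) = D\,k[\Z^2]\cap T(A,k)$, and since $D$ has strong corners (Lemma \ref{kernel generator}), multiplication by $D$ is injective and $\ker{\sigma_{A,k}}$ is free with basis indexed by the $x\in\Z^2$ with $x+\Delta\subset H(A)$, a set independent of $k$. The cokernel half, however, has a genuine gap: the central claim that for a padded finite convex $B\supset A+\Delta$ the relative map $\sigma_{B/A,k}$ is an isomorphism is false, and so is the conclusion you draw from it. The kernel of $\sigma_{B/A,k}$ has nothing to do with global switching components: it only constrains sums along lines that \emph{miss} $A$. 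Take $d_1=(1,0)$, $d_2=(0,1)$, $A$ a square and $B$ a larger concentric square; a table with values $+1,-1$ at two points of $B\setminus A$ lying in the same row just above $A$ is killed by $\sigma_{B/A,k}$, because the two columns meet $A$ (so they do not occur in $L(B/A,k)$) and the common row sum vanishes. So no amount of padding makes $\ker{\sigma_{B/A,k}}$ vanish; note also that if both $\ker{\sigma_{B/A,k}}$ and $\cok{\sigma_{B/A,k}}$ were zero, the six-term comparison sequence would force $\ker{\sigma_{A,k}}\cong\ker{\sigma_{B,k}}$, contradicting your own kernel computation (the number of translates of $\Delta$ inside $H(B)$ is strictly larger than inside $H(A)$).

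Worse, the intended consequence $\cok{\sigma_{A,k}}\cong\cok{\sigma_{B,k}}$ is itself false for general finite convex $A$: local dependencies obstruct it. For $d_1=(1,1)$, $d_2=(1,2)$ and $A=\set{0,1}^2$ one has $\ker{\sigma_{A,k}}=0$, $\#A=4$ and $3+4=7$ lines, so over a field $\cok{\sigma_{A,k}}$ has dimension $3$, whereas $\cok{\sigma_{\Z^2,k}}$, and hence $\cok{\sigma_{B,k}}$ for any large rounded $B$ (Theorem \ref{rounded dependencies}), has rank $|\det(d_1,d_2)|=1$; this is exactly the phenomenon of Fig.~\ref{corner_figure} and Theorem \ref{local_dependencies}, so the cokernel does not stabilise and no choice of $B$ repairs the reduction. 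The paper closes the argument quite differently, and this is the ingredient your proposal is missing: since forming cokernels commutes with $-\otimes_\Z k$, it suffices to show that $\cok{\sigma_{A,\Z}}$ is finitely generated and torsion-free; torsion-freeness follows by comparing $\rk[\Z]{\cok{\sigma_{A,\Z}}} = \rk[\Z]{\ker{\sigma_{A,\Z}}} - \#A + \sum_{i=1}^t \#\L_i(A)$ with the same formula over $\F_p$, using the $k$-independence of the kernel rank just established, because any $p$-torsion would make the $\F_p$-dimension strictly larger. Your appeal to the splitting/filtration mechanism of Theorem \ref{global_kernel_and_cokernel} has no analogue for finite $A$, since $\sigma_{A,k}$ is not a direct sum of ring quotient maps there.
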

\begin{proof}
Note that $\sigma_{A,k}$ is the restriction of $\sigma_{\Z^2, k}$ to
$A$, and so we have
\[\ker{\sigma_{A,k}} = \ker{\sigma_{\Z^2,k}} \cap T(A,k). \]
Using this, we compute
\[
\begin{array}{rcl}
\ker{\sigma_{A,k}}
& = & \ker{\sigma_{\Z^2, k}} \cap T(A,k) \\
& = & D k[\Z^2] \cap T(A,k) \\
& = & \set{ f \in D k[\Z^2] \st \supp{f} \subset A } \\
& = & \set{ f \in D k[\Z^2] \st P(f) \subset H(A) } \\
& = & \set{ f D \st f \in k[\Z^2], P(f D) \subset H(A) } \\
& = & \set{ f D \st f \in k[\Z^2], P(f) + \Delta \subset H(A) }.
\end{array}
\]
The latter is clearly a free $k$-module of finite rank with a basis
indexed by the $x \in \Z^2$ such that $x + \Delta \subset H(A)$. By
Lemma \ref{kernel generator}, this basis is independent of $k$. Therefore
the rank of $\ker{\sigma_{A,k}}$ does not depend on $k$.

This proves the result for the kernel. The result for the cokernel now
follows from algebraic generalities. It suffices to show that
$\cok{\sigma_{A,\Z}}$ is a free $\Z$-module of finite rank, as taking
cokernels commutes with taking tensor products (see e.g.
\cite[Ch. XVI.2, Prop. 2.6]{Lang}.)
Since it is clearly finitely generated, we must show that it is
torsion-free \cite[Ch. I.8, Thm. 8.4]{Lang}. We do this by comparing
the ranks over $\F_p$ for $p$ prime to the rank over $\Z$.

From the sequence
\[ 0 \ra \ker{\sigma_{A,\Z}} \ra T(A,\Z)
\ra L(A,\Z) \ra \cok{\sigma_{A,\Z}} \ra 0 \]
we see that
\[ \rk[\Z]{\cok{\sigma_{A,\Z}}} =
\rk[\Z]{\ker{\sigma_{A,\Z}}} - \#A + \sum_{i=1}^t \#\L_i(A).\]
In the same way, we have for any prime $p$
\[\dim[\F_p]{\cok{\sigma_{A,\F_p}}} =
\dim[\F_p]{\ker{\sigma_{A,\F_p}}} - \#A + \sum_{i=1}^t \#\L_i(A).\]
By the result about the kernel, we know that
$\rk[\Z]{\ker{\sigma_{A,\Z}}} = \dim[\F_p]{\ker{\sigma_{A,\F_p}}}$.
Using the formulas above this implies
\[\rk[\Z]{\cok{\sigma_{A,\Z}}} = \dim[\F_p]{\cok{\sigma_{A,\F_p}}}.\]
But if $\cok{\sigma_{A,\Z}}$ has any $p$-torsion, the $\F_p$-dimension
would be strictly bigger. We conclude that $\cok{\sigma_{A,\Z}}$ is
torsion-free.
\end{proof}

Similar to the global case ($A = \mathbb{Z}^2$), this result allows
to state a necessary and sufficient condition for consistency of
a vector of potential line sums in the case of finite convex $A$:

\begin{corollary}\label{local_consistency_problem}
Let $A \subset \Z^2$ be finite and convex. A vector of potential line
sums in $L(A, k)$ comes from a table in $T(A, k)$ if and only if it
satisfies all dependencies.
\end{corollary}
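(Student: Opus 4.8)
The plan is to deduce Corollary \ref{local_consistency_problem} directly from Lemma \ref{free cokernel} (the general principle that a vector of potential line sums comes from a table precisely when it satisfies all dependencies, provided the cokernel is a free $k$-module of finite rank) together with Theorem \ref{local_kernel_and_cokernel} (which guarantees that $\cok{\sigma_{A,k}}$ is indeed a free $k$-module of finite rank whenever $A$ is finite and convex).

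Concretely, the argument is: let $A \subset \Z^2$ be finite and convex, and let $k$ be a commutative ring that is not the zero ring. By Theorem \ref{local_kernel_and_cokernel}, $\cok{\sigma_{A,k}}$ is a free $k$-module of finite rank, say rank $n$. Now apply Lemma \ref{free cokernel} with this $A$ and this $k$: it tells us that $\Dep{A,k}$ is a free $k$-module of rank $n$, and more importantly that for any $l \in L(A,k)$ we have $l \in \im{\sigma_{A,k}}$ if and only if $r(l) = 0$ for all $r \in \Dep{A,k}$. Unwinding the definitions, "$l \in \im{\sigma_{A,k}}$" is exactly the statement that the vector of potential line sums $l$ comes from a table in $T(A,k)$, and "$r(l) = 0$ for all $r \in \Dep{A,k}$" is exactly the statement that $l$ satisfies all dependencies (recall that $\Dep{A,k}$ was defined in Section \ref{algebraic_notation_section} as the image of $\Hom[k]{\cok{\sigma_{A,k}},k}$ inside $\Hom[k]{L(A,k),k}$, i.e. the space of $k$-linear dependencies between line sums). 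This is precisely the asserted equivalence, so the corollary follows.

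I do not anticipate any genuine obstacle here: the corollary is a formal consequence of two results already established. The only thing to be careful about is to note that Lemma \ref{free cokernel} requires $k$ not to be the zero ring, which is part of our standing hypotheses, and that Theorem \ref{local_kernel_and_cokernel} supplies exactly the freeness-and-finite-rank hypothesis that Lemma \ref{free cokernel} needs. One may also add, as a remark mirroring the global case, that since $\Dep{A,k}$ is free of finite rank $n$, it in fact suffices to check finitely many dependencies — namely any $k$-basis of $\Dep{A,k}$, of which there are $n$. The proof is therefore a two-line citation, and I would write it as such.

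\begin{proof}
By Theorem \ref{local_kernel_and_cokernel}, $\cok{\sigma_{A,k}}$ is a free
$k$-module of finite rank. We may therefore apply Lemma \ref{free cokernel}
to $\sigma_{A,k}$: it tells us that for any $l \in L(A,k)$ we have
$l \in \im{\sigma_{A,k}}$ if and only if $r(l) = 0$ for all
$r \in \Dep{A,k}$. The left-hand condition says precisely that $l$ is the
vector of line sums of some table in $T(A,k)$, and the right-hand condition
says precisely that $l$ satisfies all dependencies. Moreover $\Dep{A,k}$ is
itself free of finite rank, so it suffices to verify the dependencies in a
finite basis.
\end{proof}
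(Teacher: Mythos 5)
Your proof is correct and follows exactly the paper's own route: the paper likewise deduces the corollary by noting that Theorem \ref{local_kernel_and_cokernel} supplies the freeness-and-finite-rank hypothesis needed to apply Lemma \ref{free cokernel} to $\cok{\sigma_{A,k}}$. Your write-up simply spells out the unwinding of definitions that the paper leaves implicit.
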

\begin{proof} Theorem \ref{local_kernel_and_cokernel} shows that we can
apply Lemma \ref{free cokernel} to the cokernel of the line sum map.
\end{proof}

\section{Local and global dependencies}%
\label{local_global_dependencies_section}%

Let $A \subset \Z^2$. From the comparison sequence we have a map
$ \cok{\sigma_{A,k}} \ra \cok{\sigma_{\Z^2, k}}.$
This map induces a map on the $k$-duals
\[\Dep{\Z^2, k} \lra \Dep{A, k}.\]
We call the image of this map the \emph{global dependencies} on $A$.
When this map is injective, the dependencies on $\Z^2$ all restrict to
different dependencies on $A$. Our intuition is that this should happen
whenever $A$ is `sufficiently large.'

\begin{lemma}\label{injective global dependencies}
Suppose there is an $x \in \Z^2$ such that $x + \Delta \subset H(A)$.
Then $\cok{\sigma_{A,k}} \ra \cok{\sigma_{\Z^2,k}}$
is surjective and so
\[\Dep{\Z^2, k} \lra \Dep{A, k}\]
is injective.
\end{lemma}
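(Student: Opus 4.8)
The plan is to exploit the comparison sequence from Section \ref{comparison_sequence_section} applied to the inclusion $A \subset \Z^2$, reducing the surjectivity of $\cok{\sigma_{A,k}} \ra \cok{\sigma_{\Z^2,k}}$ to showing that the last term in that sequence, $\cok{\sigma_{\Z^2/A, k}}$, vanishes — or, equivalently (and more cleanly), to showing directly that the extension $\Z^2/A$ is \emph{non-interfering} in the sense defined at the end of Section \ref{comparison_sequence_section}. By condition (3) in that definition, non-interference is precisely the statement that $\cok{\sigma_{A,k}} \ra \cok{\sigma_{\Z^2,k}}$ is injective; but I actually want surjectivity, so instead I should argue that the final map $\cok{\sigma_{\Z^2,k}} \ra \cok{\sigma_{\Z^2/A,k}}$ is zero because its target is zero. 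Once surjectivity of $\cok{\sigma_{A,k}} \ra \cok{\sigma_{\Z^2,k}}$ is established, applying $\Hom[k]{-,k}$ (which is left exact) immediately turns the surjection into an injection $\Dep{\Z^2,k} = \Hom[k]{\cok{\sigma_{\Z^2,k}},k} \hookrightarrow \Hom[k]{\cok{\sigma_{A,k}},k} = \Dep{A,k}$, which is exactly the claim.

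So the real work is to show $\cok{\sigma_{\Z^2/A,k}} = 0$, i.e. that the relative line sum map $\sigma_{\Z^2/A,k} : T(\Z^2/A,k) \ra L(\Z^2/A,k)$ is surjective. Here I would argue as follows. First, since $A$ is what matters only through $H(A)$, and the hypothesis gives a point $x$ with $x+\Delta \subset H(A)$, I would reduce to the case where $A$ is itself finite and convex: replace $A$ by $A' = H(A) \cap \Z^2$ if $A$ is bounded, and handle the unbounded case separately (if $H(A)$ is unbounded one can intersect with a large box still containing $x+\Delta$; shrinking $A$ only makes surjectivity of $\cok{\sigma_{A,k}} \to \cok{\sigma_{\Z^2,k}}$ harder, so it suffices to treat a convenient finite convex $A$ containing a translate of $\Delta$). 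Second, for finite convex $A$ I already know from Theorem \ref{local_kernel_and_cokernel} that $\cok{\sigma_{A,k}}$ is free of finite rank, and from Theorem \ref{global_kernel_and_cokernel} that $\cok{\sigma_{\Z^2,k}}$ is free of rank $\sum_{i<j}|\det(d_i,d_j)|$. The comparison sequence then gives a surjection $\cok{\sigma_{\Z^2,k}} \twoheadrightarrow \cok{\sigma_{\Z^2/A,k}}$ onto a finitely generated module, and I need this to be zero.

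The cleanest route to that vanishing is a dimension count, done over a field — say take $k$ a field first, e.g. $k = \F_p$ or $k = \Q$, and afterwards invoke that all the ranks involved are independent of $k$ (Theorems \ref{global_kernel_and_cokernel} and \ref{local_kernel_and_cokernel}). From the comparison exact sequence, $\dim_k \cok{\sigma_{\Z^2/A,k}} = \dim_k\cok{\sigma_{\Z^2,k}} - \dim_k\cok{\sigma_{A,k}} + \dim_k\ker{\sigma_{B/A,k}} - \big(\dim_k\ker{\sigma_{\Z^2,k}} - \dim_k\ker{\sigma_{A,k}}\big)$, but a slicker bookkeeping is to compute Euler characteristics directly from the four-term description. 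The key input is the kernel computation from the proof of Theorem \ref{local_kernel_and_cokernel}: $\ker{\sigma_{A,k}} = \{ fD \st P(f) + \Delta \subset H(A)\}$, which is free with basis $\{x \in \Z^2 \st x+\Delta \subset H(A)\}$; the hypothesis says this basis is non-empty, and in fact it is exactly the set of lattice points in the (convex) "inner" region $H(A) \ominus \Delta$. I would then compare $\rk \ker{\sigma_{A,k}}$, $\#A$, and the $\#\L_i(A)$ against the corresponding global quantities using the translate $x+\Delta \subset H(A)$: the global generator $D$ restricted to $A$ already produces "enough" kernel that the Euler-characteristic of the relative complex forces $\cok{\sigma_{\Z^2/A,k}} = 0$.

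The step I expect to be the main obstacle is precisely this last counting argument: one must show that the deficiency $\#A - \sum_i \#\L_i(A)$ differs from the corresponding infinite-volume "density" deficiency by an amount exactly accounted for by $\rk\ker{\sigma_{A,k}}$ plus the global cokernel rank $\sum_{i<j}|\det(d_i,d_j)|$ — in other words that the existence of one interior translate of $\Delta$ is enough to make the relative line sum map surjective, with no local obstruction surviving. I would pin this down by working on the nested pair $A \subset B$ with $B$ a large convex set and letting $B$ grow: the global cokernel appears as a stable limit, and the vanishing of $\cok{\sigma_{\Z^2/A,k}}$ becomes the statement that passing from $A$ to all of $\Z^2$ (or to any larger convex $B \supset A$) adds exactly as many independent line-sum constraints as new lines, modulo the fixed $\sum_{i<j}|\det(d_i,d_j)|$ global relations — which is what Theorem \ref{local_kernel_and_cokernel}'s rank formulas, being $k$-independent and additive along the comparison sequence, deliver once the non-emptiness hypothesis guarantees the inner region carries the full global kernel.
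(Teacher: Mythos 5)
Your formal reductions are fine: a surjection $\cok{\sigma_{A,k}} \ra \cok{\sigma_{\Z^2,k}}$ dualizes to an injection on dependencies by left exactness of $\Hom[k]{-,k}$, and by the comparison sequence that surjectivity is equivalent to the vanishing of $\cok{\sigma_{\Z^2/A,k}}$; the reduction to a convenient translate of $\Delta$ inside $H(A)$ is also legitimate (the paper itself restricts to $A = \Delta \cap \Z^2$). The genuine gap is in the step you yourself flag as the main obstacle: you never actually prove the vanishing, and the rank-counting scheme you sketch cannot deliver it. For $B = \Z^2$ the relative modules $T(\Z^2/A,k)$ and $L(\Z^2/A,k)$ have infinite rank, so Euler characteristics do not apply; after truncating to a finite convex $B \supset A$, the six-term sequence only yields the alternating-sum identity $\rk{\cok{\sigma_{B/A,k}}} - \rk{\ker{\sigma_{B/A,k}}} = \sum_i\left(\#\L_i(B)-\#\L_i(A)\right) - \left(\#B - \#A\right)$, and knowing this difference cannot force the cokernel to be zero unless you separately control $\rk{\ker{\sigma_{B/A,k}}}$ (the kernel of a line sum map on the non-convex set $B \setminus A$, computed nowhere in the paper) or know the exact ranks of $\cok{\sigma_{A,k}}$ and $\cok{\sigma_{B,k}}$. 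Theorem \ref{local_kernel_and_cokernel} gives only freeness and $k$-independence, not a formula; the exact cokernel ranks for finite convex sets are precisely what this lemma, Theorem \ref{rounded dependencies} and Theorem \ref{local_dependencies} are in the process of establishing, and Theorem \ref{rounded dependencies} uses the present lemma, so appealing to a ``stable limit'' of those ranks is circular.

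What your plan is missing is the mechanism the paper uses to get surjectivity over an arbitrary commutative ring $k$ with no counting at all: it suffices to show $L(\Z^2,k) = \im{\sigma_{\Z^2,k}} + L(A,k)$, and since the conductor $\ff_k = \bigoplus_i \overline{D_i}\,\quo{k[\Z^2]}{d_i-1}$ lies inside $\im{\sigma_{\Z^2,k}}$, it is enough that $\freemod{k}{\L_i(A)} \ra \quo{k[\Z^2]}{(d_i-1,\,D_i)}$ be surjective for each $i$. That surjectivity is proved by a division-with-remainder argument: $P(D_i)$ has no side parallel to $d_i$, its extreme corners in the directions orthogonal to $d_i$ carry coefficients $\pm 1$, and the translate of $\Delta$ inside $H(A)$ guarantees that the lines of $\L_i(A)$ cover a wide enough band, so every $f$ can be reduced modulo $D_i$ to an element supported on those lines. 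Some version of this local, constructive reduction (or another argument valid for general $k$ and infinite relative data) is needed; the dimension count as proposed would fail.
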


The \emph{geometric line} through $p \in \R^2$ in the direction
$d \in \R^2 \setminus \set{0}$ is the set
\[\set{p + \lambda d \st \lambda \in \R} \cap \Z^2,\]
provided this set contains at least two points.

Let $d = (a,b) \in \Z^2 \setminus \set{0}$ and put $g = \gcd{a,b}$. Then
any geometric line in direction $d$ is the union of $g$ lines. If $l$ is
a geometric line in direction $d$ and $p, q \in H(l)$ are at least
$\norm{d}$ apart, then the line segment from $p$ to $q$ contains at
least one point of every line through~$l$.

\begin{proof}[Proof of Lemma \ref{injective global dependencies}]
Without loss of generality we restrict ourselves to
$A = \Delta \cap \Z^2$.
We want to show that for any $l \in L(\Z^2, k)$, there is an
$l' \in L(A,k)$ that maps to the same element
in $\cok{\sigma_{\Z^2, k}}$. That is, we must show
\[ L(\Z^2, k) = \im{\sigma_{\Z^2, k}} + L(A,k).\]
Recall that the conductor
\[
\ff_k =
\overline{D_1} \quo{k[\Z^2]}{d_1 - 1}
\oplus\cdots\oplus
\overline{D_t} \quo{k[\Z^2]}{d_t - 1}
\]
is the largest $L(\Z^2, k)$ ideal that is contained in
$\im{\sigma_{\Z^2,k}}$. It is therefore sufficient
to show that $ L(\Z^2, k) = \ff_k + L(A,k)$, or, equivalently,
that
\[\freemod{k}{\L_i(A)} \lra \quo{k[\Z^2]}{(d_i - 1, D_i)}\]
is surjective for all $i$.

Let $l$ be a geometric line in direction $d_i$ such that $H(l)$
intersects $\Delta$. As we have $\Delta = P(D_i) + P(d_i - 1)$, the
intersection is a segment of width at least $\norm{d_i}$, so every line
in the direction $d_i$ that lies in $l$ is in $\L_i(A)$.
Let $S \subset \Z^2$ be the union of all the lines in $\L_i(A)$.

Note that $P(D_i)$ does not have a side parallel to $d_i$, as all the
directions are pairwise independent. It follows that $P(D_i)$ has
maximal points in the directions orthogonal to $d_i$. These points are
nescesarily corners. The coefficients on these corners are $\pm1$.
It follows that for any $f \in k[\Z^2]$, there is a $g \in k[\Z^2]$ such that
$\supp{g} \subset S$ and $f - g \in D_i k[\Z^2]$.

By the above, this implies that
\[\freemod{k}{\L_i(A)} + \overline{D_i} \quo{k[\Z^2]}{d_i - 1}
= \quo{k[\Z^2]}{d_i - 1}\]
and so
\[\freemod{k}{\L_i(A)} \lra \quo{k[\Z^2]}{(d_i - 1, D_i)}\]
is surjective.
\end{proof}

Let $A$ be finite and convex.
We define the \emph{rounded part} of $A$ to be the subset
\[A' = \left(\bigcup (x + \Delta)\right) \cap \Z^2\]
where the union runs over all $x \in \Z^2$ such that $x + \Delta \subset
H(A)$. We call $A$ \emph{rounded} if it is non-empty and $A' = A$.

\begin{theorem}\label{rounded dependencies}
Let $A$ be finite, convex and rounded. Then $\cok{\sigma_{A,k}}$ is
equal to $\cok{\sigma_{\Z^2, k}}$ and so we have
\[\Dep{A,k} = \Dep{\Z^2, k}.\]
\end{theorem}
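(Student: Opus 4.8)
The plan is to deduce Theorem~\ref{rounded dependencies} from the comparison sequence together with Lemma~\ref{injective global dependencies} and the structural results of Section~\ref{global_case_section}. Since $A$ is rounded we have $A = A'$, which by definition means that $A$ is exactly the union of the translates $x + \Delta$ with $x + \Delta \subset H(A)$; in particular there is at least one such $x$, so the hypothesis of Lemma~\ref{injective global dependencies} is satisfied. That lemma already gives us that $\cok{\sigma_{A,k}} \to \cok{\sigma_{\Z^2,k}}$ is surjective, and dually that $\Dep{\Z^2,k} \to \Dep{A,k}$ is injective. So the whole content of the theorem is the reverse direction: that this surjection is in fact an \emph{isomorphism}, equivalently that the extension $\Z^2/A$ is non-interfering in the sense of Section~\ref{comparison_sequence_section}, equivalently that $\cok{\sigma_{A,k}} \to \cok{\sigma_{\Z^2,k}}$ is injective.

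First I would invoke the comparison sequence of Section~\ref{comparison_sequence_section} with $B = \Z^2$: the relevant piece reads
\[
\ker{\sigma_{\Z^2/A,k}} \xrightarrow{\ \overline{\delta_{\Z^2/A,k}}\ } \cok{\sigma_{A,k}} \lra \cok{\sigma_{\Z^2,k}} \lra \cok{\sigma_{\Z^2/A,k}} \ra 0.
\]
We already know the right-hand map is surjective with a left inverse obstruction measured by $\overline{\delta}$; the map $\cok{\sigma_{A,k}} \to \cok{\sigma_{\Z^2,k}}$ is injective precisely when $\overline{\delta_{\Z^2/A,k}}$ is the zero map, i.e. when $\Z^2/A$ is non-interfering. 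So the goal reduces to showing: for $A$ finite, convex and rounded, every switching component of the complement $T(\Z^2/A,k)$ that lies in $\ker{\sigma_{\Z^2/A,k}}$ maps to $0$ in $\cok{\sigma_{A,k}}$.

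The key step is to identify $\ker{\sigma_{\Z^2/A,k}}$ concretely. The argument should run parallel to the proof of Theorem~\ref{local_kernel_and_cokernel}: a table $f$ on $\Z^2 \setminus A$ lies in $\ker{\sigma_{\Z^2/A,k}}$ when its line sums restricted to lines \emph{not} meeting $A$ all vanish. I would try to show that such an $f$, extended by zero to all of $\Z^2$, must itself lie in $\ker{\sigma_{\Z^2,k}} = D\,k[\Z^2]$ — intuitively, if $A$ is rounded then $H(A) = x+\Delta$ for a suitable configuration, and a nonzero multiple $gD$ of $D$ whose support avoids $A$ would have polygon $P(g)+\Delta$ disjoint from $A = A'$, which (because $A'$ is built precisely from the translates of $\Delta$ fitting inside $H(A)$) forces $P(g)+\Delta$ to lie outside $H(A)$; but then the line sums of $gD$ through lines that \emph{do} meet $A$ also vanish, which means $\sigma_{A,k}(g D|_A) = 0$ trivially and more importantly $gD$, viewed in $L(A,k)$ via the interference map, lands in $\im{\sigma_{A,k}}$. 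Made precise, this says $\overline{\delta_{\Z^2/A,k}}(f) = 0$: the element $f$ of the complement, being a multiple of $D$, is itself a switching component on $\Z^2$, so its ``interference'' with $A$ is realized by restricting that very switching component, hence is zero in the cokernel of $\sigma_{A,k}$.

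The main obstacle I anticipate is the bookkeeping in that last geometric argument — precisely pinning down why, for rounded $A$, a table on $\Z^2 \setminus A$ with vanishing ``external'' line sums must extend to a global switching component, rather than merely to something whose line sums vanish on external lines. The rounded hypothesis is exactly what should make this work: $A' = A$ says $A$ has no ``corners sticking out'' relative to $\Delta$, so there is no room for a table supported on the complement to have nonzero line sums on lines through $A$ while still being in the kernel of the external line sum map. I would handle this by combining the characterization of $\ker{\sigma_{\Z^2,k}}$ as $D\,k[\Z^2]$ with the strong-corners lemma (Lemma~\ref{kernel generator}) controlling $P(gD) = P(g) + \Delta$, and the observation from Section~\ref{finite_convex_section} that corners of a Minkowski sum decompose uniquely — this is what lets us conclude that a multiple of $D$ supported off $A$ has its polygon, hence all its ``action,'' confined to translates of $\Delta$ that do not fit in $H(A)$, which is incompatible with contributing nontrivially to $\cok{\sigma_{A,k}}$. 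Once non-interference is established, conditions (1)--(3) of Section~\ref{comparison_sequence_section} give $\cok{\sigma_{A,k}} \cong \cok{\sigma_{\Z^2,k}}$, and dualizing yields $\Dep{A,k} = \Dep{\Z^2,k}$ as claimed.
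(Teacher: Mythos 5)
Your reduction is fine as far as it goes: surjectivity of $\cok{\sigma_{A,k}} \to \cok{\sigma_{\Z^2,k}}$ from Lemma \ref{injective global dependencies}, and the observation that the remaining task is injectivity, equivalently non-interference of $\Z^2/A$, i.e.\ vanishing of $\overline{\delta_{\Z^2/A,k}}$, is exactly right. The gap is in your key step: it is false that an element of $\ker{\sigma_{\Z^2/A,k}}$, extended by zero, lies in $\ker{\sigma_{\Z^2,k}} = D\,k[\Z^2]$. Membership in the relative kernel only forces the sums along lines \emph{not} meeting $A$ to vanish; the sums along lines through $A$ are unconstrained --- they are precisely what the interference map records. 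Concretely, take $d_1=(1,0)$, $d_2=(0,1)$, so $D=(u-1)(v-1)$ and $\Delta$ is the unit square, and let $A=\set{0,\dots,m}\times\set{0,\dots,n}$ with $m,n\ge 1$, which is finite, convex and rounded. The table $f$ with value $+1$ at $(m+1,0)$ and $-1$ at $(m+1,1)$ is supported off $A$ and has zero sum on every line missing $A$ (the only candidate, column $m+1$, sums to $0$), so $f\in\ker{\sigma_{\Z^2/A,k}}$; but its sums in rows $0$ and $1$ are $\pm1$, so $f\notin D\,k[\Z^2]$. Thus your proposed mechanism --- ``$f$ is itself a global switching component, hence its interference with $A$ is trivially zero'' --- breaks down: here $\delta_{\Z^2/A,k}(f)$ is a nonzero element of $L(A,k)$, and it dies in $\cok{\sigma_{A,k}}$ only because it happens to be realized by a table on $A$ (value $+1$ at $(0,0)$, $-1$ at $(0,1)$), which is exactly the nontrivial statement one must prove. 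The supporting geometric claim is also unsound: $\supp{gD}$ avoiding $A$ does not force $P(g)+\Delta$ to be disjoint from $H(A)$, since the support may surround $A$.

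The paper's proof replaces this step by a different device: it exhausts $\Z^2$ by a chain $A=A_0\subset A_1\subset A_2\subset\cdots$ obtained by scaling $H(A)$ about a generic interior point, arranged so that each increment $A_{i+1}\setminus A_i$ lies on a single edge of a convex polygon. For such one-edge extensions, the rounded hypothesis together with the $\pm1$ corner coefficients of $D$ (Lemma \ref{kernel generator}) yields non-interference of $A_{i+1}/A_i$ directly (two cases, according to whether the edge is parallel to one of the $d_j$), and then a finite-support argument --- any global table lies in some $T(A_i,k)$ --- transports injectivity along the chain to give injectivity of $\cok{\sigma_{A,k}}\to\cok{\sigma_{\Z^2,k}}$. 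If you want to salvage your one-shot approach, you would have to prove that $\overline{\delta_{\Z^2/A,k}}$ vanishes without any description of $\ker{\sigma_{\Z^2/A,k}}$ as multiples of $D$, and the example above shows that this kernel genuinely contains elements with nonzero interference, so some stepwise or conductor-type argument is needed in its place.
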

\begin{proof}
Note that by Lemma \ref{injective global dependencies} the map
\[\cok{\sigma_{A,k}} \lra \cok{\sigma_{\Z^2,k}}\]
is surjective, so we just have to show it is injective.
The strategy for this is to construct
\[A = A_0 \subset A_1 \subset A_2 \subset \cdots\]
such that $A_{i+1} / A_i$ is non-interfering for all $i \ge 0$ and
$\bigcup_{i \ge 0} A_i$ is all of $\Z^2$. Suppose that $l \in L(A, k)$
such that $l = \sigma_{\Z^2, k}(t)$ for some $t \in T(\Z^2, k)$. Then
$t \in T(A_i, k)$ for some $i$, so $l$ maps to $0$ in
$\cok{\sigma_{A_i, k}}$. By the non-interference, $\cok{\sigma_{A,k}}$
maps injectively to $\cok{\sigma_{A_i, k}}$, so it follows that $l$
maps to $0$ in $\cok{\sigma_{A,k}}$, as required.

Pick a point $p$ in the interior of $H(A)$ in sufficiently general
position (we will make this more precise later on). For $\lambda \in
\R_{\ge1}$ let $H(\lambda)$ be the point multiplication of the set
$H(A)$ with factor $\lambda$ and center $p$. Let $A(\lambda) =
H(\lambda) \cap \Z^2$. Note that the union of all $H(\lambda)$ is the
entire plane, so we have
\[\bigcup\limits_{\lambda \ge 1} A(\lambda) = \Z^2.\]

As $\Z^2 \subset \R^2$ is countable and discrete, the set of $\lambda$'s
such that
\[A(\lambda) \neq \bigcup\limits_{1 \le \mu < \lambda} A(\mu)\]
is a countable and discrete subset of $\R_{\ge 1}$. Let
$(\lambda_i)_{i=0}^\infty$ be the sequence of these $\lambda$'s in
increasing order. Put $A_i = A(\lambda_i)$.

For all $\lambda \in \R_{\ge 1}$ one sees that
\[\bigcup\limits_{1 \le \mu < \lambda} H(\mu)\]
is the boundary of $H(\lambda)$. Therefore, any point in $A_{i+1} \setminus
A_{i}$ is on the boundary of $H(\lambda_i)$. This means that these
points lie on finitely many line segments: the edges of the polygon
$H(\lambda_{i+1})$.

In fact, by choosing the point $p$ outside a countable union of lines,
one can ensure that for every $i$ there is a single edge $l_i$ of the
polygon $H(\lambda_{i+1})$ such that all the points in
$A_{i+1} \setminus A_i$ lie on that edge.

Suppose that $l_i$ does not lie in one of the directions $d_1, \ldots
d_t$. Then $\Delta$ has a maximal point $m$ in the direction orthogonal to
$l_i$, which is a corner and so the corresponding coefficient of $D$ is
$\pm1$. Let $p \in A_{i+1} \setminus A_i$. As $A$ is rounded, the
translate of $\Delta$ such that $m$ coincides with $p$ is contained
entirely in $A_{i+1}$. It follows that the map
\[\ker{\sigma_{A_{i+1},k}} \lra \freemod{k}{A_{i+1}\setminus A_i}\]
is surjective, so $A_{i+1}/A_i$ is non-interfering.

Suppose that $l_i$ lies in the direction $d_j$. The edge of $H(A)$ in
direction $d_j$ is at least $\norm{d_j}$ long, as $A$ is rounded.
So the edge $l_i$ of $H(\lambda_{i+1})$ has length
$\lambda_{i+1} |d_j| > |d_j|$. Therefore, every line in the direction
$d_j$ that lies inside the geometric line containing $l_i$ meets
$A_{i+1}$. Note that $\Delta$ has an edge in direction $d_j$ and that
the intersection of $\supp{D}$ with the geometric line through that edge
consists precisely of the two corner points, both of which have
coefficient $\pm1$. These two points are adjacent points within the same
line on that geometric line. As $A$ is rounded, every translate of
$\Delta$ such that the edge in direction $d_j$ lies between on $l_i$,
lies completely within $H(A)$. From these observations we can conclude
that
\[
\sigma_{A_{i+1}/A_i, k} : T(A_{i+1}/A_i, k) \lra L(A_{i+1}/A_i, k)
\]
is onto and that its kernel is generated by the intersections of the correct
translates of $D$ with $T(A_{i+1}/A_i, k)$. Therefore the map
\[
\ker{\sigma_{A_{i+1},k}} \lra \ker{\sigma_{A_{i+1}/A_i,k}}
\]
is onto, that is, $A_{i+1}/A_i$ is non-interfering.
\end{proof}

\begin{theorem}
\label{local_dependencies}
Let $A$ be finite and convex and suppose that $A'$ is non-empty. Then
$\Dep{A,k}$ decomposes in a natural way as a direct sum
\[ \Dep{A, k} = \Dep{\Z^2, k} \oplus \Hom[k]{\cok{\sigma_{A/A',k}}, k}.\]
We call the second summand the \emph{local dependencies} on $A$.
\end{theorem}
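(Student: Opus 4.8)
The plan is to exploit the comparison sequence for the inclusion $A' \subset A$ together with Theorem \ref{rounded dependencies}. First I would apply the comparison sequence (Lemma, Section \ref{comparison_sequence_section}) to $A' \subset A$, obtaining the six-term exact sequence relating $\ker$ and $\cok$ of $\sigma_{A',k}$, $\sigma_{A,k}$ and $\sigma_{A/A',k}$. The key geometric input is that $A'$ is the rounded part of $A$: by construction, $A'$ contains every translate $x + \Delta$ with $x + \Delta \subset H(A)$, and $A'$ is itself finite, convex and rounded (one should check $(A')' = A'$, which is immediate from the definition since $H(A') = H(A)$ when $A'$ is non-empty, as $A'$ contains all the relevant translates of $\Delta$). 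Hence Theorem \ref{rounded dependencies} applies to $A'$, giving $\cok{\sigma_{A',k}} = \cok{\sigma_{\Z^2,k}}$.

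Next I would argue that the extension $A/A'$ is non-interfering, i.e. that $\ker{\sigma_{A,k}} \to \ker{\sigma_{A/A',k}}$ is surjective, or equivalently that $\overline{\delta_{A/A',k}} = 0$. From the proof of Theorem \ref{local_kernel_and_cokernel} we know $\ker{\sigma_{A,k}} = \set{ fD \st f \in k[\Z^2],\ P(f) + \Delta \subset H(A)}$, and this basis is indexed by $x \in \Z^2$ with $x + \Delta \subset H(A)$ — but these are precisely the translates whose support lies in $A'$. So every element of $\ker{\sigma_{A,k}}$ already has support in $A'$, which forces $\ker{\sigma_{A,k}} = \ker{\sigma_{A',k}}$ and makes the map to $\ker{\sigma_{A/A',k}}$ surjective (indeed the middle kernel surjects onto $\ker{\sigma_{A/A',k}}$ since the latter is a quotient piece). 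Thus $A/A'$ is non-interfering, and the comparison sequence collapses to a short exact sequence
\[ 0 \ra \cok{\sigma_{A',k}} \ra \cok{\sigma_{A,k}} \ra \cok{\sigma_{A/A',k}} \ra 0. \]

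Finally I would show this sequence splits naturally, then dualize. Splitting: by Theorem \ref{local_kernel_and_cokernel} all three cokernels are free $k$-modules of finite rank (for $\cok{\sigma_{A/A',k}}$ one uses the matrix decomposition $\sigma_{A,k} = \left(\begin{smallmatrix} \sigma_{A',k} & * \\ 0 & \sigma_{A/A',k}\end{smallmatrix}\right)$ to see its cokernel is a quotient of a free module and is free by the same Euler-characteristic argument), so the surjection onto a free — hence projective — module splits (\cite[Ch. III.3, Prop. 3.2]{Lang}). A canonical splitting can be obtained from the block-triangular form: $L(A,k) = L(A',k) \oplus L(A/A',k)$ and one checks the composite $L(A/A',k) \hookrightarrow L(A,k) \twoheadrightarrow \cok{\sigma_{A,k}}$ induces an isomorphism onto a complement of the image of $\cok{\sigma_{A',k}}$. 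Applying $\Hom[k]{-,k}$ to the split exact sequence, and using $\cok{\sigma_{A',k}} = \cok{\sigma_{\Z^2,k}}$, yields
\[ \Dep{A,k} = \Hom[k]{\cok{\sigma_{A,k}},k} = \Hom[k]{\cok{\sigma_{\Z^2,k}},k} \oplus \Hom[k]{\cok{\sigma_{A/A',k}},k} = \Dep{\Z^2,k} \oplus \Hom[k]{\cok{\sigma_{A/A',k}},k}, \]
which is the claimed decomposition, the first summand being identified with the global dependencies via the restriction map of Section \ref{local_global_dependencies_section} (injective here by Lemma \ref{injective global dependencies}, since $A' \neq \emptyset$ gives an $x$ with $x+\Delta \subset H(A)$).

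The main obstacle I anticipate is verifying that $A/A'$ is genuinely non-interfering — in particular pinning down that the kernel of $\sigma_{A,k}$ has support entirely inside $A'$ and that this is exactly what the rounded-part construction guarantees. The splitting and dualization are then formal, but care is needed to make the splitting \emph{natural} (independent of choices) so that the phrase ``in a natural way'' in the statement is justified; the block-triangular structure of $\sigma_{A,k}$ relative to the decomposition $T(A,k) = T(A',k) \oplus T(A/A',k)$, $L(A,k) = L(A',k) \oplus L(A/A',k)$ provides exactly such a canonical splitting.
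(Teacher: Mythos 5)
Your overall strategy (comparison sequence for $A'\subset A$, Theorem \ref{rounded dependencies} for $A'$, then split and dualize) is the paper's, but your argument for the crucial step --- that $A/A'$ is non-interfering --- fails. You correctly note that every element of $\ker{\sigma_{A,k}}$ has the form $fD$ with $P(f)+\Delta\subset H(A)$ and hence has support inside $A'$, so $\ker{\sigma_{A,k}}=\ker{\sigma_{A',k}}$. But the map $\ker{\sigma_{A,k}}\ra\ker{\sigma_{A/A',k}}$ in the comparison sequence is projection onto the $T(A/A',k)$-component, so your observation makes that map \emph{zero}, not surjective; the parenthetical ``since the latter is a quotient piece'' is not a valid inference, because in the snake-lemma sequence the obstruction to surjectivity is exactly the class of the interference map in $\cok{\sigma_{A',k}}$. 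On your route, non-interference would require $\ker{\sigma_{A/A',k}}=0$, i.e.\ that no nonzero table supported on $A\setminus A'$ has vanishing sums along all lines missing $A'$ --- a statement you neither prove nor can assume (it is a consequence of the theorem, so invoking it would be circular). The paper sidesteps this by verifying the equivalent condition that $\cok{\sigma_{A',k}}\ra\cok{\sigma_{A,k}}$ is injective: writing $f_{A'}=f_A\circ f_{A/A'}$ for the natural maps to $\cok{\sigma_{\Z^2,k}}$, Lemma \ref{injective global dependencies} gives surjectivity of $f_A$ and Theorem \ref{rounded dependencies} gives bijectivity of $f_{A'}$, whence $f_{A/A'}$ is injective and $f_{A'}^{-1}\circ f_A$ is a retraction of it, splitting the sequence immediately and naturally.

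Two secondary problems. Your ``canonical splitting'' via the composite $L(A/A',k)\hookrightarrow L(A,k)\ra\cok{\sigma_{A,k}}$ does not obviously descend to a map $\cok{\sigma_{A/A',k}}\ra\cok{\sigma_{A,k}}$, precisely because $\sigma_{A,k}$ of a table on $A\setminus A'$ has a nonzero $L(A',k)$-component (the interference map); and the projectivity of $\cok{\sigma_{A/A',k}}$ over an arbitrary ring $k$, which your fallback splitting argument needs, is not supplied by Theorem \ref{local_kernel_and_cokernel} ($A\setminus A'$ is not convex and is not treated there). Both issues evaporate once one uses the retraction $f_{A'}^{-1}\circ f_A$, which is also what makes the decomposition natural. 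Finally, your justification that $A'$ is rounded via $H(A')=H(A)$ is false in general (near a sharp corner of $H(A)$ the translates of $\Delta$ do not reach, so $H(A')$ can be strictly smaller); the correct reason $(A')'=A'$ holds is that the corners of $\Delta$ are lattice points, so any translate $x+\Delta\subset H(A)$ already satisfies $x+\Delta\subset H(A')$.
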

\begin{proof}
From the comparison sequence for $A/A'$ we have
\[
\cok{\sigma_{A',k}} \lra_{f_{A/A'}} \cok{\sigma_{A,k}} \lra
\cok{\sigma_{A/A',k}} \lra 0.
\]
Lemma \ref{injective global dependencies} shows
$f_A : \cok{\sigma_{A,k}} \ra \cok{\sigma_{\Z^2,k}}$
is surjective and Theorem \ref{rounded dependencies}
shows $f_{A'} : \cok{\sigma_{A',k}} \ra \cok{\sigma_{\Z^2,k}}$
is bijective. Note that  $f_{A'} = F_{A} \circ f_{A/A'}$.
We conclude that $f_{A/A'}$ is injective (so $A/A'$ is non-interfering)
and that $f_{A'}^{-1} \circ f_{A}$ is a splitting map of $f_{A/A'}$.
It follows that
\[ \cok{\sigma_{A,k}} = \cok{\sigma_{A',k}} \oplus
\cok{\sigma_{A/A',k}}.\]
This implies the required result (recall that $\Dep{A', k} = \Dep{\Z^2,
k}$.)
\end{proof}

\section{Conclusions}%
To conclude this paper, we summarize the main results obtained within
our algebraic framework, and their interpretation from the classical
combinatorial perspective.

Lemma \ref{dependency_cok_lemma} relates an algebraic property of the
cokernel of the line sum map to the consistency problem. Theorem
\ref{global_kernel_and_cokernel} states that for the case $A =
\mathbb{Z}^2$, the cokernel actually satisfies this property.  In
addition, a characterization of the switching components is provided for
this case.  This results in a strong statement concerning the
consistency problem for the case $A = \mathbb{Z}^2$: a set of linesums
corresponds to a table if and only if it satisfies a certain number of
independent dependencies (Corollary \ref{global_consistency_problem}).
In Section \ref{global_ring_section}, properties are derived on the
structure of the coefficients in the separate dependencies. Section
\ref{example_section} relates the material from Section
\ref{algebraic_notation_section}, \ref{global_case_section} and
\ref{global_ring_section} to the example from the Combinatorial DT
literature, given in Section \ref{global_example}.

The next sections, starting with Section
\ref{comparison_sequence_section}, focus on cases where $A$ is a true
subset of $\mathbb{Z}^2$.  A relative setup is introduced in Section
\ref{comparison_sequence_section}, where a DT problem on a particular
domain is related to a problem on a subset of that domain. In Sections
\ref{finite_convex_section} and \ref{local_global_dependencies_section},
this relation is applied to describe the structure of line sums for
finite convex sets. Corollary \ref{global_consistency_problem} provides
a necessary and sufficient condition for consistency in the case of a
finite, convex reconstruction domain. Theorem \ref{rounded dependencies}
shows that if $A$ is finite, convex and rounded, the dependencies are
exactly those that also apply to the global case $A = \mathbb{Z}^2$.
Finally, Theorem \ref{local_dependencies} considers the decomposition of
the dependencies for the general finite convex case into global and
local dependencies.

The results on the structure of dependencies between the line sums in
discrete tomography problems can either be viewed as a collection of new
research results, or as an illustration of the power of applying Ring
Theory and Commutative Algebra to this combinatorial problem. We expect
that a range of additional results can be obtained within the context of
this algebraic framework.

\appendix
\section{Tools from algebra}%
\label{tools_algebra_section}

\subsection{Group rings}
We begin by recalling some results on group rings. See for example
\cite[Ch. II.3]{Lang} for a short introduction or \cite{Passman} for
more results on these rings.

\begin{definition}
Let $k$ be a commutative ring and $G$ be a group. The \emph{group ring}
$k[G]$ is the $k$-algebra which as a $k$-module is the free with basis $G$,
\[
k[G] = \bigoplus_{g \in G} k [g]
\]
and whose multiplication is given by
\[
\begin{array}{ccl}
[g] \cdot [h] = [gh] & \qquad   & \mbox{for all $g, h \in G$} \\ \relax
[g] \cdot \lambda = \lambda[g] && \mbox{for all $g \in G$,
$\lambda \in k$}.
\end{array}
\]
\end{definition}

When there is no confusion possible we will drop the brackets around
elements of $G$, writing a typical element of $k[G]$ simply as
$\sum_{g \in G} \lambda_g g$ with $\lambda_g = 0$ for almost all
$g \in G$.

A ring homomorphism $k \lra k'$ induces a unique ring
homomorphism
\[k[G] \lra k'[G].\]
A group homomorphism $G \lra H$ induces a unique $k$-algebra
homomorphism
\[k[G] \lra k[H].\]

\begin{lemma}\label{groupring quotients}
Let $G$ be a group and $N$ be a normal subgroup. Let $I_N$ be the ideal
of $k[G]$ generated by all elements of the form $n - 1$ with $n \in N$.
Then there is a short exact sequence
\[0 \lra I_N \lra k[G] \lra k[\quo{G}{N}] \lra 0.\]
\end{lemma}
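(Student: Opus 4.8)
The plan is to construct the map $k[G] \to k[G/N]$ explicitly, identify its kernel with $I_N$, and check exactness by hand; the only slightly delicate point is showing $\ker \supset I_N$ requires nothing beyond the obvious, while $\ker \subset I_N$ is the step that needs a genuine argument.

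First I would observe that the quotient homomorphism of groups $\pi : G \to G/N$ induces, by the functoriality recalled just above the statement, a $k$-algebra homomorphism $\bar\pi : k[G] \to k[G/N]$, sending $\sum_g \lambda_g g$ to $\sum_g \lambda_g \pi(g)$. This map is surjective, since every basis element $gN$ of $k[G/N]$ is the image of the basis element $g$ of $k[G]$. So the short exact sequence reduces to the claim $\ker(\bar\pi) = I_N$.

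For the inclusion $I_N \subset \ker(\bar\pi)$: each generator $n - 1$ with $n \in N$ maps to $\pi(n) - 1 = 1_{G/N} - 1_{G/N} = 0$, and $\ker(\bar\pi)$ is an ideal, so it contains the ideal generated by these elements. For the reverse inclusion $\ker(\bar\pi) \subset I_N$ — the main step — I would take $f = \sum_g \lambda_g g \in \ker(\bar\pi)$ and group the sum according to cosets: choosing a set of coset representatives, write $f = \sum_{\text{cosets } C} \sum_{g \in C} \lambda_g g$. Fixing one representative $g_C$ per coset $C$, each $g \in C$ is $n g_C$ for a unique $n \in N$, so $g - g_C = (n - 1) g_C \in I_N$. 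Hence $f \equiv \sum_C \big(\sum_{g \in C} \lambda_g\big) g_C \pmod{I_N}$. Now $\bar\pi(f) = 0$ says exactly that $\sum_{g \in C} \lambda_g = 0$ for every coset $C$ (since the cosets form a $k$-basis of $k[G/N]$), so the right-hand side is $0$ and $f \in I_N$.

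The one place to be careful is the coset-summation manipulation: since $f$ has finite support each inner sum $\sum_{g \in C}\lambda_g$ is finite and only finitely many cosets contribute, so the rearrangement is legitimate and $I_N$ really is a two-sided ideal (the relation $\lambda[g] = [g]\lambda$ and $[g](n-1) = (gng^{-1} - 1)[g]$ with $gng^{-1} \in N$ keeps us inside $I_N$ on both sides). Once $\ker(\bar\pi) = I_N$ is established, the sequence $0 \to I_N \to k[G] \xrightarrow{\bar\pi} k[G/N] \to 0$ is exact by construction, which is the assertion of the lemma. I expect no real obstacle here; this is a standard computation, included for completeness because it underpins Lemma \ref{independent directions} and Theorem \ref{global_kernel_and_cokernel}.
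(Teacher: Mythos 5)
Your proposal is correct, and it is the standard argument this lemma tacitly relies on: the paper itself states Lemma \ref{groupring quotients} without proof, referring the reader to the literature on group rings, so there is no competing argument in the text to compare against. Your computation --- surjectivity of the induced map $k[G] \to k[\quo{G}{N}]$, the trivial inclusion of $I_N$ into its kernel, and the coset-grouping step $g - g_C = (n-1)g_C$ showing the kernel is contained in $I_N$, together with the check that normality of $N$ makes $I_N$ a two-sided ideal --- is exactly the canonical proof and fills the omitted detail correctly.
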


\subsection{Filtrations}
We continue with some generalities on filtrations.

\begin{definition}Let $R$ be a commutative ring and $M$ an $R$-module. A
\emph{filtration} of $M$ is a collection of submodules
\[0 = M_0 \subset M_1 \subset \cdots \subset M_t = M.\]
The quotient modules $M_{i+1}/M_i$ are called the \emph{successive
quotients} of the filtration.
\end{definition}

\begin{lemma} Let $R$ be a commutative ring and let $M'$ and $M''$ be
filtered $R$-modules. Suppose we have a short exact sequence
\[0\ra M'\ra M \ra M''\ra 0.\]
Then $M$ admits a filtration whose successive quotients are those of
$M'$ followed by those of $M''$
\end{lemma}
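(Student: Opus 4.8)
The plan is to build the desired filtration of $M$ directly from the two given ones, using \emph{images} of the submodules of $M'$ and \emph{preimages} of the submodules of $M''$. Write the short exact sequence as $0 \ra M' \xrightarrow{\iota} M \xrightarrow{\pi} M'' \ra 0$, and let $0 = M'_0 \subset \cdots \subset M'_s = M'$ and $0 = M''_0 \subset \cdots \subset M''_t = M''$ be the two filtrations. For $0 \le i \le s$ set $N_i = \iota(M'_i)$, and for $0 \le j \le t$ set $N_{s+j} = \pi^{-1}(M''_j)$.

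First I would check that $0 = N_0 \subset N_1 \subset \cdots \subset N_{s+t} = M$ is a genuine filtration. Monotonicity within each of the two blocks is immediate, since both $\iota(-)$ and $\pi^{-1}(-)$ are monotone operations on submodules. The only point that uses the exactness hypothesis is the junction at index $s$: we have $N_s = \iota(M') = \im{\iota} = \ker{\pi} = \pi^{-1}(0) = \pi^{-1}(M''_0) = N_{s+0}$, so the two definitions agree there. The endpoints are also correct, since $N_0 = \iota(0) = 0$ and $N_{s+t} = \pi^{-1}(M'') = M$ by surjectivity of $\pi$.

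Next I would identify the successive quotients. For the lower block, $\iota$ is injective, so it restricts to an isomorphism $M'_i \xrightarrow{\sim} N_i$ for every $i$, and hence induces an isomorphism $N_i/N_{i-1} \cong M'_i/M'_{i-1}$. For the upper block I would invoke the correspondence theorem: since $\pi$ is surjective, for each $j$ the composite $\pi^{-1}(M''_j) \ra M''_j \ra M''_j/M''_{j-1}$ is surjective with kernel exactly $\pi^{-1}(M''_{j-1})$, so it descends to an isomorphism $N_{s+j}/N_{s+j-1} \cong M''_j/M''_{j-1}$. Reading these off in order, the successive quotients of $(N_i)$ are those of $M'$ followed by those of $M''$, as required.

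I do not expect a genuine obstacle here; the statement is a routine diagram chase. The single place deserving care is the compatibility of the two halves at the index $s$, which is exactly where full exactness of the sequence (not merely having a composable pair $\pi\iota = 0$) is needed; everything else follows from injectivity of $\iota$, surjectivity of $\pi$, and the standard isomorphism theorems for $R$-modules.
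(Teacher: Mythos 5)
Your proof is correct. The paper states this lemma without proof, and your construction is exactly the standard argument it implicitly relies on: filter $M$ by $N_i = \iota(M'_i)$ followed by $N_{s+j} = \pi^{-1}(M''_j)$, with exactness used only to glue the two blocks at $\iota(M') = \ker{\pi} = \pi^{-1}(M''_0)$, and the successive quotients identified via injectivity of $\iota$ on the lower block and the isomorphism theorem applied to $\pi^{-1}(M''_j) \ra M''_j/M''_{j-1}$ on the upper block.
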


\begin{lemma}\label{composition of injectives}
Let $R$ be a commutative ring, let $A, B$ and $C$ be $R$-modules and
suppose $f : A \ra B$ and $g: B \ra C$ are injective morphisms. Then
there is a short exact sequence
\[ 0 \ra \cok{f} \ra \cok{gf} \ra \cok{g} \ra 0.\]
\end{lemma}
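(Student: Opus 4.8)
The statement to prove is Lemma \ref{composition of injectives}: given injective morphisms $f : A \ra B$ and $g : B \ra C$ of $R$-modules, there is a short exact sequence $0 \ra \cok f \ra \cok{gf} \ra \cok g \ra 0$.

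The plan is to construct the two maps explicitly and then verify exactness at each of the three spots by a direct diagram chase. First I would define the map $\cok f \ra \cok{gf}$: an element of $\cok f = B/\im f$ is a class $\overline{b}$ for $b \in B$; send it to $\overline{g(b)} \in C/\im{gf}$. This is well-defined because if $b \in \im f$, say $b = f(a)$, then $g(b) = g(f(a)) \in \im{gf}$. Next the map $\cok{gf} \ra \cok g$: send $\overline{c} \in C/\im{gf}$ to $\overline{c} \in C/\im g$; this is well-defined since $\im{gf} \subset \im g$. The composite $\cok f \ra \cok{gf} \ra \cok g$ sends $\overline b$ to the class of $g(b)$ in $C/\im g$, which is zero, so the composite is zero and the sequence is a complex.

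Then I would check exactness. Surjectivity of $\cok{gf} \ra \cok g$ is immediate since $C \ra C/\im g$ is already surjective. For injectivity of $\cok f \ra \cok{gf}$: if $\overline b \mapsto 0$, then $g(b) = g(f(a))$ for some $a \in A$, hence $g(b - f(a)) = 0$; since $g$ is injective (this is where injectivity of $g$ enters), $b = f(a)$, so $\overline b = 0$ in $\cok f$. For exactness in the middle: suppose $\overline c \in \cok{gf}$ maps to $0$ in $\cok g$, i.e. $c = g(b)$ for some $b \in B$; then $\overline c$ is the image of $\overline b \in \cok f$ under the first map, by definition. Conversely any element in the image of $\cok f \ra \cok{gf}$ has the form $\overline{g(b)}$, which visibly maps to $0$ in $C/\im g$. (Note injectivity of $f$ is not actually needed for the exactness of this sequence, only injectivity of $g$; but the hypothesis is harmless and matches the intended application.)

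There is no real obstacle here — it is a routine diagram chase, essentially the Snake Lemma applied to the two short exact sequences $0 \ra A \ra B \ra \cok f \ra 0$ and $0 \ra B \ra C \ra \cok g \ra 0$, or more directly an instance of the "third isomorphism theorem"-style statement $\bigl(C/\im{gf}\bigr)\big/\bigl(\im g/\im{gf}\bigr) \cong C/\im g$ together with the identification $\im g / \im{gf} \cong B/\im f = \cok f$ induced by $g$ (injectivity of $g$ makes this last map an isomorphism). The only point worth stating carefully is precisely that last identification, namely that $g$ induces an isomorphism between $\cok f$ and the submodule $\im g/\im{gf}$ of $\cok{gf}$, for which injectivity of $g$ is exactly what is required. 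Either presentation — the explicit map-and-chase or the two-isomorphism-theorems argument — yields a clean short proof of a few lines.
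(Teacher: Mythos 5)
Your proof is correct; the paper itself states Lemma \ref{composition of injectives} in the appendix without proof, so there is no argument to compare against, but your explicit construction of the two maps ($\overline{b} \mapsto \overline{g(b)}$ and the natural projection) together with the three exactness checks is exactly the standard verification, and your parenthetical observation that only the injectivity of $g$ is actually used (injectivity of $f$ being harmless but unnecessary) is accurate. Either of your two packagings --- the direct chase, or the identification of $\ker\left(\cok{gf} \ra \cok{g}\right)$ with $\im{g}/\im{gf} \cong \cok{f}$ via $g$ --- would serve as a complete proof.
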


\subsection{Weak coprimality}
The rest of this appendix is devoted to a generalisation of the concept
of coprimality and the Chinese Remainder Theorem.

\begin{definition}\label{weakly coprime}
Let $R$ be a commutative ring and let $f, g \in R$. We say that $f$ is
\emph{weakly coprime to $g$} if multiplication by $f$ is an injective
map on $\quo{R}{g}$.
\end{definition}

The common notion of coprimality, namely that the ideal $(f, g)$
generated by $f$ and $g$ be all of $R$, implies that multiplication by
$f$ is a \emph{bijective} map on $\quo{R}{g}$.

\begin{lemma}\label{wcrt2}%
Let $R$ be a commutative ring and let $f, g \in R$ such that $f$ is
weakly coprime to $g$.  Then there is a short exact sequence
\[0 \ra \quo{R}{fg} \ra \quo{R}{f} \oplus \quo{R}{g} \ra
\quo{R}{(f, g)} \ra 0.\]
\end{lemma}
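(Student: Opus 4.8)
\textbf{Proof proposal for Lemma \ref{wcrt2}.}

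The plan is to construct the short exact sequence explicitly and verify exactness at each of the three spots by hand. The natural candidate maps are the ``diagonal'' map $\phi : \quo{R}{fg} \ra \quo{R}{f} \oplus \quo{R}{g}$ sending $\overline{x} \mapsto (\overline{x}, \overline{x})$ (well-defined since $fg$ lies in both $(f)$ and $(g)$), and the ``difference'' map $\psi : \quo{R}{f} \oplus \quo{R}{g} \ra \quo{R}{(f,g)}$ sending $(\overline{x}, \overline{y}) \mapsto \overline{x - y}$ (well-defined since $(f,g)$ contains both $(f)$ and $(g)$). Surjectivity of $\psi$ is immediate because $(\overline{x}, \overline{0}) \mapsto \overline{x}$ hits every class. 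Composition $\psi \circ \phi$ is visibly zero. So the three routine points are: (i) $\phi$ is injective; (ii) $\ker \psi \subset \im \phi$.

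For (i), suppose $x \in R$ maps to $0$ in both $\quo{R}{f}$ and $\quo{R}{g}$, so $x = af = bg$ for some $a, b \in R$. I want to conclude $x \in (fg)$, i.e.\ that $x$ is a multiple of $fg$. From $af = bg$ we get that $af \equiv 0 \pmod g$, i.e.\ $\overline{a}\,\overline{f} = 0$ in $\quo{R}{g}$; since $f$ is weakly coprime to $g$, multiplication by $f$ is injective on $\quo{R}{g}$, hence $\overline{a} = 0$, i.e.\ $a = cg$ for some $c \in R$. Then $x = af = cfg \in (fg)$, as desired. This is exactly the place where the weak coprimality hypothesis is used, and it is the only nontrivial input; I expect this to be the crux of the argument, though it is short.

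For (ii), suppose $(\overline{x}, \overline{y})$ lies in $\ker \psi$, so $x - y \in (f, g)$, say $x - y = uf + vg$ with $u, v \in R$. Replacing $x$ by $x - uf$ (which does not change $\overline{x}$ in $\quo{R}{f}$) and $y$ by $y + vg$ (which does not change $\overline{y}$ in $\quo{R}{g}$), we may assume $x = y$ in $R$. Then $(\overline{x}, \overline{y}) = (\overline{x}, \overline{x}) = \phi(\overline{x})$, so the pair is in the image of $\phi$. This completes exactness in the middle, and together with (i) and the surjectivity of $\psi$ establishes that the displayed sequence is exact.

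The whole proof is elementary element-chasing; there is no real obstacle beyond correctly invoking the definition of weak coprimality in step (i). One could alternatively phrase it more slickly using the snake lemma applied to multiplication by $f$ acting on the short exact sequence $0 \to \quo{R}{g} \to \quo{R}{g} \oplus (\text{something}) \to \cdots$, but the direct verification above is cleaner and self-contained, so that is the route I would take.
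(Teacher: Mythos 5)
Your proof is correct and is exactly the verification the paper has in mind: its own proof reads only ``Straightforward verification,'' and your diagonal/difference maps with the element chase (using weak coprimality precisely to show injectivity of $\quo{R}{fg} \ra \quo{R}{f}\oplus\quo{R}{g}$) is the intended argument. Nothing is missing.
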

\begin{proof} Straightforward verification.\end{proof}

If two elements are coprime in the common (strong) sense, then in the
sequence above we have $R/(f,g) = 0$, so the first map is an
isomorphism.  This fact is commonly refered to as the Chinese Remainder
Theorem.

\begin{lemma}\label{product filtration}
Let $R$ be a commutative ring and let $f_1, f_2$ and $g$ be in $R$.
Suppose that $f_1$ and $f_2$ are weakly coprime to $g$. Then there is a
short exact sequence
\[0 \ra \quo{R}{(f_1, g)} \ra \quo{R}{(f_1f_2, g)} \ra \quo{R}{(f_2,g)}
\ra 0.\]
\end{lemma}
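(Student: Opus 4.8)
The plan is to construct the two maps explicitly and verify exactness by hand, using the weak coprimality hypotheses only at the single point where injectivity of the first map could fail. First I would define the left-hand map $\quo{R}{(f_1, g)} \to \quo{R}{(f_1 f_2, g)}$ to be multiplication by $f_2$ (on representatives), and the right-hand map $\quo{R}{(f_1 f_2, g)} \to \quo{R}{(f_2, g)}$ to be the natural quotient map. I would check first that these are well-defined: for the left map, if $x \in (f_1, g)$ then $f_2 x \in (f_1 f_2, g)$, which is immediate; for the right map, $(f_1 f_2, g) \subset (f_2, g)$, so the quotient map descends. Surjectivity of the right-hand map is clear since it is induced by a quotient of $R$. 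Exactness in the middle is the computation that an element $x + (f_1 f_2, g)$ lies in $(f_2, g)/(f_1f_2, g)$ precisely when it can be written as $f_2 y$ modulo $(f_1 f_2, g)$ for some $y$; writing $x = f_2 a + g b$ and pushing through, one sees the kernel of the quotient map is exactly the image of multiplication by $f_2$, modulo the relations $f_1 f_2 \equiv 0$ and $g \equiv 0$.

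The one place that genuinely uses the hypothesis is injectivity of the left-hand map. Suppose $x \in R$ with $f_2 x \in (f_1 f_2, g)$; I must show $x \in (f_1, g)$. Write $f_2 x = f_1 f_2 a + g b$ for some $a, b \in R$. Then $f_2(x - f_1 a) = g b \equiv 0$ in $\quo{R}{g}$. Since $f_2$ is weakly coprime to $g$, multiplication by $f_2$ is injective on $\quo{R}{g}$, so $x - f_1 a \equiv 0$ in $\quo{R}{g}$, i.e. $x - f_1 a \in (g)$, hence $x \in (f_1, g)$ as desired. Note that only the weak coprimality of $f_2$ to $g$ is needed here; the hypothesis on $f_1$ is what makes the analogous statement symmetric and is presumably used elsewhere (it guarantees, for instance via Lemma \ref{wcrt2}, that all three modules in the sequence behave well), but it is not strictly required for this particular exact sequence.

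The main obstacle — such as it is — is purely bookkeeping: keeping the three ideals $(f_1, g)$, $(f_1 f_2, g)$, $(f_2, g)$ straight and making sure each of the three exactness checks (injectivity, exactness in the middle, surjectivity) is carried out against the correct quotient. There is no deep content; the proof is the kind of diagram-chase-free direct verification that the authors elsewhere dispatch with ``straightforward verification'' (cf. the proof of Lemma \ref{wcrt2}). I would therefore present it compactly: define the two maps, note well-definedness, verify the kernel/image identifications line by line, and isolate the single application of Definition \ref{weakly coprime} as above. If a slicker argument is wanted, one could instead observe that the sequence is obtained by applying the snake lemma to the obvious map of short exact sequences relating $\quo{R}{(f_1 f_2, g)}$ to $\quo{R}{(f_2, g)}$ with kernel controlled by multiplication by $f_2$ on $\quo{R}{(f_1, g)}$, but the hands-on version is shorter here.
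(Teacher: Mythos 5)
Your proof is correct, and in substance it coincides with the paper's, which disposes of the lemma in one line by applying Lemma \ref{composition of injectives} to the multiplication-by-$f_1$ and multiplication-by-$f_2$ maps on $R/g$: both are injective by weak coprimality, their composite is multiplication by $f_1f_2$, and the three cokernels are $R/(f_1,g)$, $R/(f_1f_2,g)$ and $R/(f_2,g)$, so the exact sequence of cokernels is exactly the asserted one. Your hands-on verification produces the same two maps (multiplication by $f_2$ into $R/(f_1f_2,g)$, followed by the natural quotient onto $R/(f_2,g)$) and uses the hypothesis at the same single spot, namely injectivity of multiplication by $f_2$ on $R/g$; you have simply unwound the cokernel formalism into explicit ideal bookkeeping. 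Your side remark is also accurate: for this particular sequence only the weak coprimality of $f_2$ to $g$ is needed, since in the cokernel sequence for a composite of maps only injectivity of the second map is required to make the left-hand arrow injective; the paper's Lemma \ref{composition of injectives} assumes both maps injective, but the hypothesis on $f_1$ plays no role here (it is of course harmless, and the symmetric hypothesis is what the repeated applications in Theorem \ref{weak chinese remainder theorem} rely on). What the paper's packaging buys is brevity and reuse of the appendix lemma, in the same spirit as the ``straightforward verification'' of Lemma \ref{wcrt2}; what your version buys is a self-contained argument that isolates exactly where weak coprimality enters.
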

\begin{proof}
Apply lemma \ref{composition of injectives} to the multiplication by
$f_1$ and by $f_2$ maps on $R/g$.
\end{proof}

\begin{theorem}[Weak Chinese Remainder Theorem]\label{weak chinese
remainder theorem}%
Let $R$ be a commutative ring and let $x_1, \cdots, x_t \in R$ have the
property that $x_i$ is weakly coprime to $x_j$ whenever $i < j$. Then
the natural map
\[ \phi: \quo{R}{x_1 \cdots x_t}\longrightarrow
\quo{R}{x_1} \oplus \cdots \oplus \quo{R}{x_t}
\]
is injective. Its cokernel admits a filtration whose successive
quotients are isomorphic to $\quo{R}{(x_i, x_j)}$ for $1 \le i < j \le t$.
\end{theorem}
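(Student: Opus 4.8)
The plan is to prove the theorem by induction on $t$, using the two-element case (Lemma \ref{wcrt2}) as the base and Lemma \ref{product filtration} together with Lemma \ref{composition of injectives} to carry out the inductive step. The statement has two parts — injectivity of $\phi$ and the existence of a filtration on $\cok{\phi}$ with the prescribed successive quotients — and both are handled simultaneously by the induction.

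For the base case $t = 1$ the map $\phi$ is the identity on $\quo{R}{x_1}$, which is injective with zero cokernel, and the (empty) filtration has no successive quotients, matching the empty index set $\{(i,j) : 1 \le i < j \le 1\}$. For the inductive step, suppose the result holds for $t - 1$ and consider $x_1, \ldots, x_t$ with $x_i$ weakly coprime to $x_j$ for $i < j$. First I would factor $\phi$ through an intermediate module by writing $y = x_1 \cdots x_{t-1}$ and considering the composition
\[
\quo{R}{x_1 \cdots x_t} \lra \quo{R}{y} \oplus \quo{R}{x_t}
\lra \left(\quo{R}{x_1} \oplus \cdots \oplus \quo{R}{x_{t-1}}\right) \oplus \quo{R}{x_t},
\]
where the first map is the $t = 2$ instance of $\phi$ applied to the pair $(y, x_t)$ and the second map is the $(t-1)$-instance applied to $x_1, \ldots, x_{t-1}$, acting on the first summand and the identity on $\quo{R}{x_t}$. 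To apply Lemma \ref{wcrt2} to the pair $(y, x_t)$, I need $y = x_1 \cdots x_{t-1}$ to be weakly coprime to $x_t$; this follows from iterated application of Lemma \ref{product filtration} (or directly: multiplication by a product of injective maps on $\quo{R}{x_t}$ is injective), using that each $x_i$ with $i < t$ is weakly coprime to $x_t$. Both maps in the composition are then injective by the inductive hypothesis and the base case, so $\phi$ is injective.

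For the cokernel, I would apply Lemma \ref{composition of injectives} to the two injective maps above. This produces a short exact sequence
\[
0 \lra \cok{\phi_1} \lra \cok{\phi} \lra \cok{\phi_2} \lra 0,
\]
where $\cok{\phi_1} \cong \quo{R}{(y, x_t)}$ by Lemma \ref{wcrt2} and $\cok{\phi_2}$ is the cokernel of the $(t-1)$-instance, which by induction admits a filtration with successive quotients $\quo{R}{(x_i, x_j)}$ for $1 \le i < j \le t-1$. It remains to filter $\quo{R}{(y, x_t)} = \quo{R}{(x_1 \cdots x_{t-1}, x_t)}$: by repeated application of Lemma \ref{product filtration} (peeling off one factor $x_i$ at a time, each weakly coprime to $x_t$) this module has a filtration with successive quotients $\quo{R}{(x_i, x_t)}$ for $i = 1, \ldots, t-1$. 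Splicing these filtrations along the short exact sequence (using the lemma on filtrations of extensions) yields a filtration of $\cok{\phi}$ whose successive quotients are exactly the $\quo{R}{(x_i, x_j)}$ for $1 \le i < j \le t$, completing the induction.

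The main obstacle I anticipate is purely bookkeeping rather than conceptual: one must be careful that the weak-coprimality hypothesis, which is \emph{asymmetric} ($x_i$ weakly coprime to $x_j$ only for $i < j$), is used in the correct direction at each step — in particular when showing $x_1 \cdots x_{t-1}$ is weakly coprime to $x_t$ we only ever multiply by the $x_i$ with $i < t$ on $\quo{R}{x_t}$, which is exactly what the hypothesis gives, and when filtering $\quo{R}{(x_1\cdots x_{t-1}, x_t)}$ the ordering of the factors we peel off must respect this. Keeping the index ranges of the successive quotients straight through the splice is the only place where an error could creep in; everything else is a routine diagram chase.
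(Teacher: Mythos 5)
Your proposal is correct and follows essentially the same route as the paper's own proof: induction on $t$, factoring $\phi$ through $\quo{R}{x_1\cdots x_{t-1}} \oplus \quo{R}{x_t}$, using Lemma \ref{wcrt2} for the pair $(x_1\cdots x_{t-1}, x_t)$ (with the same direct observation that a composition of injective multiplications on $\quo{R}{x_t}$ is injective), Lemma \ref{product filtration} to filter $\quo{R}{(x_1\cdots x_{t-1}, x_t)}$, and Lemma \ref{composition of injectives} to splice the two cokernel filtrations. The only cosmetic difference is your base case $t=1$ versus the paper's $t=2$, which is immaterial since Lemma \ref{wcrt2} is invoked in the inductive step anyway.
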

\begin{proof}
We proceed by induction on $t$. For $t = 2$ the result is that of Lemma
\ref{wcrt2}. Let $t \ge 3$ and assume that the theorem holds for any
smaller number of $x_i$'s.

We write $\phi$ as a composition of two maps. Let $\phi_1$ be the
natural map
\[\phi_1 :
\quo{R}{x_1 \cdots x_t} \lra
\quo{R}{x_1 \cdots x_{t-1}} \oplus \quo{R}{x_t}\]
and $\phi_2$ the natural map
\[\phi_2 :
\quo{R}{x_1 \cdots x_{t-1}} \lra
\quo{R}{x_1} \oplus \cdots \oplus \quo{R}{x_{t-1}}.\]
Then we have $\phi = (\phi_2 \oplus \id_{R/x_t}) \circ \phi_1$.

Note $x_1 \cdots x_{t-1}$ is weakly coprime to $x_t$ as a composition of
injective maps is again injective. So Lemma \ref{wcrt2} applies to
$\phi_1$. In particular $\phi_1$ is injective. By the induction
hypothesis, $\phi_2$ is also injective. We conclude that $\phi$ is
injective.

By Lemma \ref{wcrt2} the cokernel of $\phi_1$
is $R/(x_1\cdots x_{t-1}, x_t)$. By repeatedly applying
Lemma \ref{product filtration}, this module admits a filtration whose
successive quotients are $R/(x_i, x_t)$ for $1 \le i \le t-1$.

Furthermore, we have $\cok{\phi_2 \oplus
\id_{R/x_t}} = \cok{\phi_2}$, which by the induction hypothesis has a
filtration whose successive quotients are isomorphic to
$R/(x_i, x_j)$ with $1 \le i < j \le t-1$.

We apply Lemma \ref{composition of injectives} to the maps $\phi_1$
and $\phi_2 \oplus \id_{R/x_t}$ and conclude that the cokernel of $\phi$
has the required filtration.
\end{proof}

\bibliographystyle{plain}
\bibliography{paper_d}

\end{document}